\newcommand{\setword}[2]{%
	\phantomsection
	#1\def\@currentlabel{\unexpanded{#1}}\label{#2}%
}
\renewcommand*\env@matrix[1][*\c@MaxMatrixCols c]{%
	\hskip -\arraycolsep
	\let\@ifnextchar\new@ifnextchar
	\array{#1}}
\long\def\ignore#1{}
\let\oldi\ignore
\newcommand{\D}{\Delta}
\newtheorem{THM}{\textbf{Theorem}}[section]
\newtheorem{THMs}{\textbf{Theorem}}[section]
\newtheorem{DEF}[THM]{\textbf{Definition}}[section]
\newtheorem{LEM}[THM]{\textbf{Lemma}}
\newtheorem{CON}[THM]{\textbf{Conjecture}}
\newtheorem{PROP}[THM]{\textbf{Proposition}}
\newtheorem{COR}[THM]{\textbf{Corollary}}
\newtheorem{CORs}{\textbf{Corollary}}[section]
\newtheorem{PRO}[THM]{\textbf{Problem}}
\newtheorem{FAC}{\textbf{Fact}}
\newtheorem{REM}{\textbf{Remark}}
\newtheorem{OPR}{\textbf{Operation}}
\newtheorem{CLA}{\textbf{Claim}}[section]
\newtheorem{THM}{Theorem}[section]
\newtheorem{LEM}[THM]{Lemma}
\newcommand{\CC}{\mathcal{C}}
\newcommand{\phiv}{\varphi}
\newcommand{\phibar}{\overline{\varphi}}
\newcommand{\pbar}{\overline{\varphi}}
\begin{document}
\title{Overfullness of edge-critical graphs with small minimal core degree}

\author{%
	Yan Cao\\
	School of Mathematical and Data Sciences, \\
	West Virginia University, Morgantown, WV 26506\\
	\texttt{yacao@mail.wvu.edu}
	\and
	Guantao Chen\thanks{This work was supported in part by NSF grant DMS-1855716.}\\
	Department of Mathematics and Statistics, \\
	Georgia State University, Atlanta, GA 30302\\
	\texttt{gchen@gsu.edu}%
	\and
	Guangming Jing\thanks{This work was supported in part by NSF grant DMS-2001130.}\\
	School of Mathematical and Data Sciences,\\
West Virginia University, Morgantown, WV 26506\\
	\texttt{gujing@mail.wvu.edu}%
	\and 
	Songling Shan\thanks{This work was supported in part by NSF grant DMS-2153938.}\\
	Department of Mathematics, \\
	Illinois State Univeristy, Normal, IL 61790 \\
	\texttt{sshan12@ilstu.edu}
} 

\date{\today}
\maketitle

 \begin{abstract}
 Let $G$ be a simple graph. Denote by $n$, $\Delta(G)$ and $\chi' (G)$ be the order, the maximum degree and the chromatic index of $G$, respectively.   We call $G$ 
  \emph{overfull} if $|E(G)|/\lfloor n/2\rfloor > \D(G)$, and {\it critical} if $\chi'(H) < \chi'(G)$ for every proper subgraph $H$ of $G$.  Clearly, if $G$ is overfull then $\chi'(G) = \Delta(G)+1$. The \emph{core} of $G$, denoted by $G_{\D}$, is the subgraph of $G$ induced by all its maximum degree vertices.  We believe that utilizing the core degree condition could be considered as an approach to attacking the overfull conjecture.  Along this direction, we in this paper show that for any integer $k\geq 2$, if $G$ is critical with $\Delta(G)\geq \frac{2}{3}n+\frac{3k}{2}$ and $\delta(G_\Delta)\leq k$, then $G$ is overfull. 
 \smallskip
 \noindent
\\ \textbf{MSC (2010)}: Primary 05C15\\ \textbf{Keywords:} Overfull conjecture,   Multi-fan, Extended Multi-fan  Shifting.

 \end{abstract}


\section{Introduction}
We will mainly follow the notation from~\cite{StiebSTF-Book}. Graphs in this paper are simple, i.e., finite, undirected, without loops or  multiple edges.  Let $G$ be a graph and let $[k]=\{i\ |\ 1\leq i\leq k\ and\ i\in\mathbb{Z}\}$ for a nonnegative integer $k$.  A {\it $k$-edge-coloring\/} of $G$ is a mapping $\varphi$:  $E(G) \rightarrow [k]$   that assigns to every edge $e$ of $G$ a color $\varphi(e) \in [k]$  such that  no two adjacent edges receive the same color.  Denote by $\CC^k(G)$ the set of all $k$-edge-colorings of $G$. 
The {\it chromatic index\/} $\chi'(G)$ is  the least integer $k\ge 0$ such that $\CC^k(G) \ne \emptyset$.  Denote by $\delta(G)$ and
$\D(G)$ the minimum and maximum degree of $G$, respectively. 
In 1960's, Vizing~\cite{Vizing-2-classes} and, independently,  Gupta~\cite{Gupta-67} proved that $\D(G) \le \chi'(G) \le \D(G)+1$. 
This leads to a natural classification of graphs. Following Fiorini and Wilson~\cite{fw},  we say a graph $G$ is of {\it class 1} if $\chi'(G) = \D(G)$ and of \emph{class 2} if $\chi'(G) = \D(G)+1$.
Holyer~\cite{Holyer} showed that it is NP-complete to determine whether an arbitrary graph is of class 1.

A graph G is \emph{critical} if $\chi'(H)<\chi'(G)$ for every proper subgraph $H$ of $G$.
In investigating the Classification Problem, critical graphs are of particular interest. 
A critical class 2 graph is called  \emph{$\D$-critical} if
 $\D(G) = \D$. An edge $e\in E(G)$ is called a {\it critical} edge if $\chi'(G-e)<\chi'(G)$. Clearly, if $G$ is critical, then every edge of $G$ is a critical edge. For convenience,  we denote $|V(G)|$ by $n$ throughout this paper. Since every matching of  $G$ has at most 
 $\lfloor n/2\rfloor$ edges,  $\chi'(G) \ge |E(G)|/ \lfloor n/2\rfloor$. 
 A graph $G$ is {\em overfull} if $|E(G)| / \lfloor n/2\rfloor > \D(G)$. In 1986, Chetwynd and Hilton~\cite{MR848854} conjectured that if $G$ is a class 2 critical graph with $\Delta(G)> \frac{n}{3}$, then $G$ is overfull. This conjecture is known as the Overfull Conjecture.
  
 The \emph{core} of a graph $G$, denoted by $G_{\D}$, is the subgraph induced by all its maximum degree vertices.
 Vizing~\cite{Vizing-2-classes} proved that if $G_\Delta$ has at most two vertices then $G$ 
is class 1. Fournier~\cite{MR0349458} generalized Vizing's result by showing that 
if $G_\Delta$ is acyclic then $G$ is class 1. 
Thus a necessary condition for a graph to be class 2 is to have a core
that contains cycles. A long-standing conjecture of Hilton and Zhao~\cite{MR1395947} claims that for a connected graph $G$ with $\D\ge 4$, 
if $\Delta(G_{\D})\leq 2$, then $G$ is overfull. We~\cite{HZ} recently confirmed this conjecture.  Another paper of Cao, Chen, and Shan~\cite{core1} extended the result above by changing the maximum degree core condition to a minimum core degree condition,  and showed that for any critical class 2 graph $G$, if $\delta(G_{\D})\leq 2$ and $\D(G) > n/2 +1$, then $G$ is overfull.  

Along this direction, we prove the following result and verify the overfull conjecture for critical graphs with a more general minimum core degree condition, and we hope to use similar ideas to attack the overfull conjecture in the future. For example, if we can improve the coefficient of $k$ in Theorem~\ref{main} from $3/2$ to $1/12$, then the overfull conjecture holds for all graphs with maximum degree $\Delta\geq 3n/4$.

\begin{THM}\label{main}
	Let $k\geq 2$ be a positive integer and $G$ be a $\Delta$-critical graph of order $n$. If $\Delta\geq \frac{2}{3}n+\frac{3k}{2}$ and $\delta(G_{\Delta})\leq k$, then $G$ is overfull. 
\end{THM}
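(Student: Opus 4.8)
The plan is to prove the contrapositive-flavored statement in the usual Vizing-adjacency style: assuming $G$ is $\Delta$-critical with $\Delta\ge \frac23 n+\frac{3k}{2}$ and $\delta(G_\Delta)\le k$, we extract enough structural information from edge-colorings of $G-e$ to force $|E(G)| > \Delta\lfloor n/2\rfloor$. First I would fix a vertex $z\in V(G_\Delta)$ with $d_{G_\Delta}(z)=\delta(G_\Delta)\le k$, so $z$ has at most $k$ neighbors of maximum degree and hence at least $\Delta-k$ neighbors of degree $\le \Delta-1$. Pick a critical edge $e=xy$ incident to $z$ (say $z=x$) and a $\Delta$-edge-coloring $\varphi$ of $G-e$; then build the machinery of multi-fans and extended multi-fans at $z$ (the tools named in the keywords). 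The classical multi-fan equation gives, for a maximum multi-fan $F$ at $z$ with respect to $e$ and $\varphi$, that every color missing at an endpoint of $F$ is present at every other vertex of $F$, which already pins down many edges; the ``extended multi-fan'' and ``shifting'' refinements (presumably developed in the body of the paper, following \cite{core1,HZ}) let one enlarge $F$ to capture essentially all of $N(z)$, because $z$ has so few maximum-degree neighbors that color shortages propagate widely.

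The key steps, in order: (1) set up the coloring $\varphi\in\CC^\Delta(G-e)$ and the maximum (extended) multi-fan $F$ at $z$, recording the set of vertices it reaches and the incidence relations between missing colors and $F$-vertices; (2) use $\delta(G_\Delta)\le k$ to show $F$ must contain all but $O(k)$ neighbors of $z$ — the point being that a vertex not in $F$ would have to ``absorb'' a missing color of $z$, and with at most $k$ high-degree neighbors there is not enough room, so $V(F)$ has size $\ge \Delta - O(k)$; (3) do a counting/degree-sum argument: every vertex in $V(F)\setminus\{z\}$ has all but few of its colors forced, so $\sum_{v\in V(F)} d_G(v)$ is large, and combined with $|V(F)|\ge \Delta-O(k)$ and $\Delta\ge \frac23 n + \frac{3k}{2}$ this pushes $|E(G)|$ past the overfull threshold $\Delta\lfloor n/2\rfloor$; (4) handle the parity of $n$ and the leftover $O(k)$ vertices carefully so that the inequality $\Delta\ge \frac23 n+\frac{3k}{2}$ is exactly what is needed — this is where the precise constant $\frac32$ on $k$ and the $\frac23$ coefficient enter, and where improvements (as the authors note, to $\frac{1}{12}$) would come from sharper bookkeeping.

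I expect the main obstacle to be step (2)–(3): turning ``$z$ has few maximum-degree neighbors'' into a genuine lower bound on $|V(F)|$ and then on the degree sum. A multi-fan only directly controls the neighbors of $z$ that it reaches, and a priori it could be short; the extended multi-fan / shifting argument must be invoked to rule out small fans, and this is delicate because each shifting operation changes the coloring and one must maintain that the fan stays maximum and that the missing-color bookkeeping is preserved. A secondary difficulty is that the degree-sum estimate counts edges inside $V(F)$ and edges from $V(F)$ to the rest; one must be sure not to double count and that the $\approx \frac13 n$ vertices outside $V(F)\cup N[z]$ do not contribute a deficit large enough to kill the bound — which is precisely why the hypothesis $\Delta \ge \frac23 n + \tfrac{3k}{2}$, equivalently $n - \Delta \le \tfrac13 n - \tfrac{3k}{2}$, is imposed. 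Once these are in hand, the final inequality $|E(G)|/\lfloor n/2\rfloor > \Delta$ should follow by a direct, if careful, computation, and $\chi'(G)=\Delta+1$ is then automatic, so $G$ is overfull.
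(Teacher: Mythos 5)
Your plan diverges from the paper at the decisive point, and the divergence is a genuine gap: steps (3)--(4) cannot work as described. Overfullness forces $n$ to be odd and is equivalent to $\sum_{v\in V(G)}(\Delta-d_G(v))<\Delta$, i.e.\ the \emph{total} degree deficiency over all $n$ vertices must be smaller than $\Delta$. A degree-sum estimate restricted to $V(F)\cup N[z]$ controls only about $\Delta+O(k)$ vertices and says nothing about the remaining $n-\Delta\approx n/3$ vertices, each of which could a priori have deficiency as large as $\Delta-2$; the hypothesis $\Delta\ge\frac23 n+\frac{3k}{2}$ does not by itself exclude this. You flag this as a ``secondary difficulty,'' but it is the whole problem, and no amount of bookkeeping inside the fan resolves it. The paper's route is different in kind: it first observes that if $V(G)$ is elementary with respect to $\varphi\in\CC^{\Delta}(G-rs)$ then every color is missing exactly once, giving the exact count $|E(G)|=\frac{n-1}{2}\Delta+1$ and hence overfullness; the entire remaining effort goes into proving that $V(G)$ is elementary. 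Concretely, Lemma~\ref{1} shows all vertices of degree at least $\Delta-k+1$ form an elementary set, and the proof of Theorem~\ref{main} then rules out the existence of any vertex $x$ with $d(x)\le\Delta-k$ by a sequence of Kempe changes producing a contradiction with Lemma~\ref{1}. That elimination step --- showing no low-degree vertex can exist at all --- is the missing idea in your proposal; without it (or the elementarity reduction it feeds) you cannot reach the edge-count threshold.

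A secondary inaccuracy: your step (2) asserts the (extended) multi-fan must capture all but $O(k)$ of $N(z)$ because ``color shortages propagate widely.'' The paper's largeness mechanism is different: the fan $F$ itself may be small, and the extended multi-fan $F\cup F'$ is made large by exploiting the common neighborhood $|N(r)\cap N(s_h)|\ge 2\Delta-n-O(k)\ge n/3+O(k)$ of the center $r$ and a fan vertex $s_h$ (Claim~1 in the proof of Lemma~\ref{1}), not by arguing that $F$ swallows $N(r)$. Moreover the large structure is used to find a low-degree neighbor of the hypothetical bad vertex $x$ inside the extended fan and derive a coloring contradiction, not to accumulate degree sum.
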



%


\section{Preliminaries}\label{lemma}
This section is divided into two subsections. In the first subsection we introduce some basic notation and terminologies. In the second subsection we introduce the traditional Vizing Fan and generalize it to a larger structure.

\subsection{Basic notation and terminologies}

Let $G$ be a graph with maximum degree $\D$, and  let $e\in E(G)$ and $\varphi \in \CC^{\D}(G-e)$.

For a vertex  $v\in V(G)$, define the two color sets
\[
\varphi(v)=\{\varphi(f)\,:\, \text{$f\ne e$ is incident to $v$}\} \quad \mbox{ and}\quad \pbar(v)=[1, \D] \setminus\varphi(v).
\]
We call $\varphi(v)$ the set of colors \emph{present} at $v$ and $\pbar(v)$
the set of colors \emph{missing} at $v$. 
If $|\pbar(v)|=1$, we will also use $\pbar(v)$ to denote the  color  missing at $v$. Let $N(v)$ be the collection of all the neighbors of $v$, $N_{<\Delta}(v)$ be the collection of neighbors of $v$ with degree less than $\Delta$, and $N_{\Delta}(v)$ be the collection of neighbors of $v$ with degree exactly $\Delta$.

For a vertex set $X\subseteq V(G)$,  define  $\pbar(X)=\bigcup _{v\in X} \pbar(v)$ to be the set of missing colors of $X$. 
The set $X$ is called \emph{elementary} w.r.t. $\varphi$  or simply \emph{elementary} if $\pbar(u)\cap \pbar(v)=\emptyset$
for every two distinct vertices $u,v\in X$. 
In the rest of this paper, we may not always mention the coloring $\varphi$ if it is clearly understood.  

For a color $\alpha$, the edge set $E_{\alpha} = \{ e\in E(G)\, |\, \varphi(e) = \alpha\}$ is called a {\it color class}. Clearly, 
$E_{\alpha}$ is a {\it matching} of $G$ (possibly empty). 
For two distinct colors $\alpha,\beta$,  the  subgraph of $G$
induced by $E_{\alpha}\cup E_{\beta}$ is a union of disjoint 
paths and  even cycles, which are referred to as   \emph{$(\alpha,\beta)$-chains} of $G$
w.r.t. $\varphi$.  For a vertex $v$, let $C_v(\alpha, \beta, \varphi)$ denote the unique $(\alpha, \beta)$-chain 
containing $v$.  
If $C_v(\alpha, \beta, \varphi)$ is a path, we just write it as $P_v(\alpha, \beta, \varphi)$. The latter is commonly used when we know that $|\pbar(v)\cap \{\alpha,\beta\}|=1$.  If we interchange the colors $\alpha$ and $\beta$
on an $(\alpha,\beta)$-chain $C$ of $G$, we briefly say that the new coloring is obtained from $\varphi$ by an 
{\it $(\alpha,\beta)$-swap} on $C$, and we write it as  $\varphi/C$. 
This operation is called a \emph{Kempe change}.  If $\alpha\in\phibar(v)$, by doing operation $\alpha\rightarrow \beta$ at $v$ we mean the Kemp change $\varphi/P_{v}(\alpha,\beta,\varphi)$. We say two vertices $x$ and $y$ are {\em $(\alpha,\beta)$-linked} if they belong to the same $(\alpha,\beta)$-chain. Moreover, when $x=y$, for convenience we still say $x$ and $y$ are $(\alpha,\beta)$-linked even if $\alpha,\beta\in\phibar(x)$.

\subsection{Linear Sequence, Shifting, and Extended Multi-fan}

The fan argument was introduced by Vizing~\cite{Vizing64,vizing-2factor} in his proof of the classic results on the upper bounds of chromatic indices. 
  Let  $G$ be a class 2 graph with maximum degree $\D$, $e=rs$ be a critical edge of $G$, and let $\varphi\in \CC^{\D}(G-e)$. For an integer $p\geq0$, a sequence
$F=(r,e_0,s_0,e_1, s_1, \ldots, e_p, s_p)$ alternating between distinct vertices and edges is called a
{\em multi-fan} at $r$ with respect to $e$ and $\varphi$ if $s_0=s$, $e_0=e$ and for each $i\in [p]$, the edge $e_i=(r,s_i)$  satisfies 
$\phiv(e_i) \in \overline{\varphi}(s_j)$ for some $0\leq j\leq i-1$. For the purpose of generalization in this paper, we include the vertex $r$ in $F$ comparing to the definition of a multi-fan in the book~\cite{StiebSTF-Book}.  Let $q$ be a nonnegative integer. A  {\em linear sequence} at $r$ from $s_0$ to $s_q$ in $G$, denoted by $L=(r,e_0,s_0, e_1,s_1,\dots,e_q,s_q)$, is a sequence of distinct vertices and edges such that $e_i\in E(G)$ for $0\leq i\leq q$ and $\varphi(e_i)\in\overline\varphi(s_{i-1})$ for $i\in [q]$. Denote by  $V(L)$ and $E(L)$ respectively the set of vertices and edges contained in $L$. A {\em shifting} from $s_i$ to $s_j$ in the linear sequence $L=(r,e_0,s_0,e_1,s_1,\dots,e_q,s_q)$ is an operation that replaces the current color of $e_t$ by the color of $e_{t+1}$ for each $i\le t\leq j-1$ with $0\leq i\leq j\leq q$. Note that shifting does not change the color of $e_j$ where $e_j=xs_j$, so the resulting coloring will not be a proper coloring. In our proof we will treat $e_j$ separately to avoid this problem.

 Note that $e_0$ may not be $e$ in a linear sequence, but a linear sequence with $e_0=e$ is also a  multi-fan at $r$. Moreover, for any $s_i\in V(F)$ with $i\in [p]$, the multi-fan $F=(r,e_0,s_0,e_1, s_1, \ldots, e_p, s_p)$ contains a linear sequence at $r$ from $s_0$ to $s_i$. A linear sequence at $r$ with $\varphi(e_0)=\tau$ is called a {\em ${\tau}$-sequence}.  In our proof we will add some linear sequences not contained in a multi-fan to enlarge it.  We say a multi-fan $F$ at $r$ is {\em maximal} w.r.t. $e$ and $\varphi$ if there is no multi-fan at $r$ w.r.t. $e$ and $\varphi$ containing $F$ as a proper sub-sequence. We say a multi-fan $F$ at $r$ is  {\em maximum} w.r.t. $e$ if $|V(F)|$ is maximum among all multi-fans at $r$ w.r.t. $e$ over all colorings $\varphi\in \CC^{\D}(G-e)$. Clearly if $F$ is maximum at $r$ w.r.t. $e$, it is also maximal w.r.t. $e$ and the coloring $\varphi$ where $F$ is obtained. Let $F$ be a maximal multi-fan at $r$ w.r.t. $e$ and $\varphi$. A $\tau$-sequence $L$ at $r$ is said to be outside of $F$ if $V(L)\cap V(F)=\{r\}$. For an integer $t\geq 0$, we say a $\tau$-sequence $L=(r,f_0,v_0,f_1,v_1,\dots,f_t,v_t)$ at $r$ outside of $F$ is {\em extremal} 
 if $v_t$ is the only vertex $v_j$ with index $0\leq j\leq t$ such that either $\phibar(v_j)\cap(\cup_{i=0}^{j-1}\phibar(v_i)\cup\phibar(V(F))\cup\{\tau\})\neq \emptyset$ or $\phibar(v_j)=\emptyset$. Since a $\tau$-sequence cannot be enlarged forever, it must be a subsequence of some extremal $\tau$-sequence. Moreover, exactly one of the followings must happen for an extremal $\tau$-sequence $L$:
 \begin{enumerate}[(a)]
 	\item $V(L)\cup V(F)$ is elementary and $\{\tau\}=\phibar(v_t)$. In this case we say $L$ is of {\em Type A}.
 	\item $\phibar(v_i)\cap \phibar(V(F))\neq\emptyset$ for some $0\leq i\leq t$. In this case we say $L$ is of {\em Type B}.
 	\item $\phibar(v_i)\cap \phibar(V(F))=\emptyset$ for all $0\leq i\leq t$, and $V(L)$ is not elementary. In this case there exists a color $\alpha\in(\phibar(v_i)\cap\phibar(v_j))-\phibar(V(F))$ for some $0\leq i\leq j\leq t$ and we say $L$ is of {\em Type C}.
 	\item $\phibar(v_t)=\emptyset$ and $V(L)\cup V(F)$ is elementary. In this case $d(v_t)=\Delta$ and  we say $L$ is of {\em Type D}. 
 \end{enumerate}

 From now on we will not mention ``at $r$'' when we refer to a multi-fan or a linear sequence if it creates no confusion. Additionally, when we refer to a linear sequence outside of $F$, we always mean an extremal one unless specified otherwise. The following result regarding a multifan can be found in \cite[Theorem~2.1]{StiebSTF-Book}.

\begin{LEM}
	\label{vf1}
	Let $G$ be a class 2 graph, $e=rs_0$ be a critical edge and $\varphi\in \CC^\Delta(G-e)$. 
	If $F$  is a multifan w.r.t. $e$ and $\varphi$,   then  $V(F)$ is elementary. 
\end{LEM}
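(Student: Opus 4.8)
The plan is to split ``$V(F)$ is elementary'' into two statements and prove them in order: first, that no colour missing at $r$ is missing at any $s_i$; second, that no colour is missing at two distinct vertices $s_i,s_j$. Throughout I will use one principle coming from $G$ being class $2$: since $\chi'(G)=\D+1$, whenever $f=ru$ is an edge of $G$ at $r$ and $\psi\in\CC^\D(G-f)$, the sets $\overline{\psi}(r)$ and $\overline{\psi}(u)$ are disjoint, because a colour in both could be assigned to $f$ to extend $\psi$ to a $\D$-edge-colouring of $G$, which does not exist.

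\emph{Step 1: $\pbar(r)\cap\pbar(s_i)=\emptyset$ for every $i$.} This is where the shifting operation enters. The multi-fan $F$ contains a linear sequence $L=(r,f_0,v_0,f_1,v_1,\dots,f_q,v_q)$ from $s_0$ to $s_i$, so $v_0=s_0$, $v_q=s_i$, $f_0=e$, and $\varphi(f_\ell)\in\pbar(v_{\ell-1})$ for $\ell\in[q]$. Shift from $s_0$ to $s_i$ in $L$ and then uncolour $f_q$; call the resulting colouring $\psi$. A direct check shows $\psi\in\CC^\D(G-rs_i)$: each recoloured edge $f_{\ell-1}$ receives the colour $\varphi(f_\ell)$, which was missing at $v_{\ell-1}$, and the multiset of colours on the edges at $r$ is unchanged. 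Hence $\overline{\psi}(r)=\pbar(r)$, while $s_i$ only loses the colour on $f_q$, so $\overline{\psi}(s_i)=\pbar(s_i)\cup\{\varphi(f_q)\}\supseteq\pbar(s_i)$. By the principle above $\overline{\psi}(r)\cap\overline{\psi}(s_i)=\emptyset$, and therefore $\pbar(r)\cap\pbar(s_i)=\emptyset$.

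\emph{Step 2: $\pbar(s_i)\cap\pbar(s_j)=\emptyset$ for distinct $i,j$.} Suppose not. Among all class $2$ graphs $G$, critical edges $e=rs_0$, colourings $\varphi\in\CC^\D(G-e)$, and multi-fans $F=(r,e_0,s_0,\dots,e_p,s_p)$ at $r$ for which $V(F)$ is not elementary, choose one with $p$ minimum. Deleting $(e_p,s_p)$ leaves a multi-fan with fewer vertices, so by minimality $\{r,s_0,\dots,s_{p-1}\}$ is elementary; with Step~1 this forces the failure of elementarity to involve $s_p$, giving an index $j\le p-1$ and a colour $\alpha$ with $\alpha\in\pbar(s_j)\cap\pbar(s_p)$. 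Put $\gamma=\varphi(e_p)$; by the definition of a multi-fan $\gamma\in\pbar(s_k)$ for some $k\le p-1$, and since $\gamma$ is present at $s_p$ while $\alpha$ is not, $\alpha\ne\gamma$; by Step~1 applied to $s_p$, $\alpha\notin\pbar(r)$, so both $\alpha$ and $\gamma$ are present at $r$. Now consider the $(\alpha,\gamma)$-chain $P$ through $s_p$: exactly one of $\alpha,\gamma$ is missing at $s_p$, so $P$ is a path; it leaves $s_p$ along $e_p=rs_p$, passes through $r$, and continues along the edge at $r$ coloured $\alpha$. The argument then proceeds by cases according to whether $s_j$ and $s_k$ lie on $P$ (equivalently, whether one of them is the far endpoint of $P$), and in each case one performs an $(\alpha,\gamma)$-swap, on $P$ or on the $(\alpha,\gamma)$-chain through $s_j$; since $e=rs_0$ lies on no $(\alpha,\gamma)$-chain, the new colouring $\varphi'$ is again in $\CC^\D(G-e)$. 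Tracking how the swap changes the colours $\varphi(e_i)$ and the sets $\pbar(s_i)$, one finds that in every case $\varphi'$ either allows us to colour an edge of $G$ at $r$ (impossible by the principle above) or yields a multi-fan that, after restricting to an appropriate sub-fan, has fewer than $p$ vertices and is still not elementary, contradicting the choice of $F$. Steps~1 and 2 together give that $V(F)$ is elementary.

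\emph{Main obstacle.} The delicate part is the case analysis in Step~2: an $(\alpha,\gamma)$-swap may recolour some fan edges $e_i$ (whose colour could itself be $\alpha$ or $\gamma$) and may disturb the ``parent'' colour certifying membership in the multi-fan, so in each case one must verify that the recoloured sequence is still a multi-fan and that the new configuration is genuinely smaller than the minimal counterexample. This bookkeeping -- which vertices of $F$ lie on the swapped chain and how their missing sets change -- is the classical technical core of the Vizing-fan argument; the remaining steps are routine.
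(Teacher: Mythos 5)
The paper does not actually prove Lemma~\ref{vf1}; it quotes it from \cite[Theorem~2.1]{StiebSTF-Book}, so your attempt has to be judged against the classical argument. Your Step~1 is correct and complete: tracing a linear sequence from $s_0$ to $s_i$, shifting, and uncolouring the last edge does yield a colouring $\psi\in\CC^\D(G-rs_i)$ with $\overline{\psi}(r)=\pbar(r)$ and $\overline{\psi}(s_i)\supseteq\pbar(s_i)$, and the class-2 principle then forces $\pbar(r)\cap\pbar(s_i)=\emptyset$. This is exactly the role the shifting operation plays in the paper, and it is a clean way to get the ``$r$ versus $s_i$'' half of elementarity.

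Step~2, however, contains a genuine gap rather than a routine omission. After setting up the minimal counterexample and identifying $\alpha\in\pbar(s_j)\cap\pbar(s_p)$ and $\gamma=\varphi(e_p)\in\pbar(s_k)$, the entire content of the lemma is the assertion you leave unproved: that ``in every case'' an $(\alpha,\gamma)$-swap on $P$ or on $P_{s_j}(\alpha,\gamma,\varphi)$ either produces a colourable edge at $r$ or a strictly smaller non-elementary multi-fan. Nothing in the write-up specifies which chain is swapped in which case, and the difficulties are real, not clerical: a swap on $P$ recolours $e_p$ with $\alpha$ and may recolour other fan edges $e_i$ lying on $P$ (for instance the $\alpha$-edge at $r$ could itself be some $e_i$), so the recoloured sequence need not be a multi-fan at all, and one must also rule out degenerate configurations such as $j=k$ or $s_j$ being the far endpoint of $P$, where the missing colours of $s_j$ and $s_p$ change simultaneously. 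Without exhibiting, in each case, the specific swap, the verification that the multi-fan condition $\varphi'(e_i)\in\overline{\varphi'}(s_{j'})$ survives (or that a suitable sub-fan does), and the strict decrease in $p$, the induction does not close. Your own ``main obstacle'' paragraph concedes that this bookkeeping is the technical core of the Vizing-fan argument; in the standard source it occupies the bulk of the proof of Theorem~2.1, so as it stands the proposal establishes only Step~1 and a plausible strategy for Step~2, not the lemma.
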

Let $G$ be a class 2 graph, $e=rs_0$ be a critical edge and $\varphi\in \CC^\Delta(G-e)$. Let $F=(r,e_0,s_0,e_1, s_1, \ldots, e_p, s_p)$ be a multi-fan centered at $r$ under the coloring $\varphi$. Clearly a linear sequence $L$ at $r$ from $s_0$ to $s_q$ with $q\in [p]$ defines a linear order $\preceq_L$. Since $V(F)$ is elementary by Lemma~\ref{vf1}, it's easy to see that all the linear sequences at $r$ starting from $s_0$ to $s_q$ for some $q\in [p]$ together induce a partial order $\preceq_F$ by $\alpha\preceq_F \beta$ for two colors $\alpha,\beta\in \phibar(V(F))$ if and only if  there exists a linear sequence $L$ at $r$ starting from $s_0$ to some $s_q$ with $q\in [p]$ such that an edge $e'\in E(L)$ with $\varphi(e')=\alpha$ comes before a vertex $v\in V(L)$ with $\beta\in\phibar(v)$ along $L$. Moreover, for any color $\alpha\in\phibar(V(F))$, there is a unique vertex $v\in V(F)$ such that $\alpha\in\phibar(v)$ since $V(F)$ is elementary.  Let $v_F({\alpha})$ denote such vertex $v$. 
 We have the following Lemma as a direct consequence of Lemma~\ref{vf1}.
\begin{LEM}
	\label{vf2}
Let $G$ be a class 2 graph, $e=rs_0$ be a critical edge and $\varphi\in \CC^\Delta(G-e)$. Let  $F$ be a maximal multi-fan at $r$ w.r.t. $e$ and $\varphi$. Then for any two colors $\alpha,\beta\in\phibar(V(F))$, we have the following statements:
	
		\begin{enumerate}[(a)]
	\item If $v_F(\alpha)=r$, then $v_F(\alpha)$ and $v_F(\beta)$ are $(\alpha,\beta)$-linked.

	\item If $\alpha$ and $\beta$ are incomparable along $\preceq_F$, then $v_F(\alpha)$ and $v_F(\beta)$ are $(\alpha,\beta)$-linked.
	\item If $\alpha\preceq_F\beta$ along $\preceq_F$ and  $v_F(\alpha)$ and $v_F(\beta)$ are not $(\alpha,\beta)$-linked, then $P_{v_F(\beta)}(\alpha,\beta,\varphi)$ must contain the vertex $r$.
	\item If $v\in V(F)$ and $v\neq r$, then $F$ contains at least $|\phibar(v)|$ many $\Delta$-degree neighbors of $r$.

	\end{enumerate}
\end{LEM}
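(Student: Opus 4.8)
The plan is to prove all four parts with a single toolbox. The basic moves are \emph{shifting} along a linear sequence contained in $F$, which relocates the uncolored edge, and a Kempe $(\alpha,\beta)$-swap; a contradiction is then reached either by colouring the (newly) uncolored edge — which would give a $\Delta$-edge-colouring of the class $2$ graph $G$ — or by producing, with respect to some edge whose deletion leaves $G$ with chromatic index $\Delta$ (hence a critical edge) and some colouring, a multi-fan whose vertex set is not elementary, contradicting Lemma~\ref{vf1}, or by enlarging $F$ against its maximality. The book-keeping behind every step is that shifting along a linear sequence $L=(r,f_0,u_0,\dots,f_m,u_m)\subseteq F$ merely permutes, among the edges of $L$ incident to $r$, the colour set $\{\varphi(f_1),\dots,\varphi(f_m)\}$ and leaves $ru_m$ uncolored; thus $\phibar(r)$ is unchanged and the missing colours at vertices of $V(G)\setminus V(L)$ are unchanged. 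Since every shifted colour is present at $r$, no colour $\gamma$ with $\gamma\in\phibar(r)$ is shifted, and, because $\varphi(f_t)\in\phibar(u_{t-1})$ for $t\ge 1$, no colour $\gamma$ missing at $u_m$ (so with $v_F(\gamma)=u_m$) is shifted; hence a two-coloured chain in two such colours is preserved by the shift. This is what lets me move the uncolored edge to $v_F(\beta)$, or to $v_F(\alpha)$, without disturbing the $(\alpha,\beta)$-chains.

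For part (a), assume $v_F(\alpha)=r$, put $w:=v_F(\beta)$ (which we may take distinct from $r$), and suppose for contradiction that $r$ and $w$ are not $(\alpha,\beta)$-linked. I would shift along the linear sequence in $F$ from $s_0$ to $w$, leaving $rw$ uncolored and giving $\varphi^*\in\CC^\Delta(G-rw)$; by the previous paragraph neither $\alpha$ (which lies in $\phibar(r)$) nor $\beta$ (which is missing exactly at the last vertex $w$ of that sequence) is shifted, so the $(\alpha,\beta)$-chains are unchanged, $\alpha\in\phibar^*(r)$, $\beta\in\phibar^*(w)$, and $r,w$ remain unlinked. An $(\alpha,\beta)$-swap on $P_w(\alpha,\beta,\varphi^*)$, which is a path avoiding $r$, then makes $\alpha$ missing at both $r$ and $w$, so $rw$ may be coloured $\alpha$; this contradicts $G$ being class $2$.

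Parts (b) and (c) run the same template, but now $\alpha$ or $\beta$ may be present at $r$, so an ill-chosen shift could destroy the $(\alpha,\beta)$-chains, and — because of part (a) — one can no longer finish by directly recolouring the uncolored edge. The hypotheses are precisely what is needed: if the $\beta$-edge at $r$ lay on the linear sequence from $s_0$ to $v_F(\alpha)$, its predecessor would be $v_F(\beta)$ by Lemma~\ref{vf1}, so that $\beta$-coloured edge would precede $v_F(\alpha)$ along a linear sequence, i.e.\ $\beta\preceq_F\alpha$, which is forbidden by incomparability in (b) and, since $\preceq_F$ is a partial order, by $\alpha\preceq_F\beta$ in (c). Hence one may shift to $v_F(\alpha)$ without touching the $\alpha$- or $\beta$-edges, preserving the $(\alpha,\beta)$-chains; assuming $v_F(\alpha)$ and $v_F(\beta)$ are unlinked — and, in (c), that $P_{v_F(\beta)}(\alpha,\beta,\varphi)$ avoids $r$, which (using $\alpha\preceq_F\beta$ to prolong the shifted sequence toward $v_F(\beta)$) keeps the relevant swap away from $r$ — one performs the $(\alpha,\beta)$-swap and checks that the resulting colouring reaches one of the contradictions listed above, typically a multi-fan in which $\alpha$ is missing at the two distinct vertices $v_F(\alpha)$ and $v_F(\beta)$. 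Part (a) is used here to show that every $(\alpha,\gamma)$-chain with $\gamma\in\phibar(r)$ is anchored at $r$, so that the exceptional conclusion in (c) is the only one that can occur.

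For part (d), fix $v=s_i\in V(F)\setminus\{r\}$ and let $\gamma\in\phibar(v)$. Then $\gamma\notin\phibar(r)$ — otherwise $v_F(\gamma)=r$, against Lemma~\ref{vf1} — so $\gamma$ is present at $r$, say on $rx$; if $x\notin V(F)$ then appending $rx,x$ to $F$ gives a multi-fan properly containing $F$, against maximality, so $x=s_{j(\gamma)}\in V(F)$ and, $G$ being simple, $rx=e_{j(\gamma)}$ with $\varphi(e_{j(\gamma)})=\gamma$. As $e_1,\dots,e_p$ carry distinct colours, $\gamma\mapsto s_{j(\gamma)}$ is injective, so $F$ contains at least $|\phibar(v)|$ neighbours $s_{j(\gamma)}$ of $r$; it remains to see each has degree $\Delta$. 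If some $s_{j(\gamma)}$ missed a colour $\mu$, then, as above, $\mu$ is present at $r$ on an $F$-edge $e_m$ with $s_m\in V(F)$; I would shift to $v$, note that $\gamma$ and $\mu$ are not shifted so $\gamma\in\phibar^*(v)$, $\mu\in\phibar^*(s_{j(\gamma)})$, $\varphi^*(e_{j(\gamma)})=\gamma$ and the $(\gamma,\mu)$-chains persist, and then perform a $(\gamma,\mu)$-swap — directly on $P_{s_{j(\gamma)}}(\gamma,\mu,\varphi^*)$ if $\mu\in\phibar(r)$, and otherwise along a chain located with the help of part (a) and of $\gamma\preceq_F\mu$ — to make $\gamma$ missing at both $r$ and $v$, so that $rv$ may be coloured $\gamma$, a contradiction. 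The step I expect to be hardest is exactly this Kempe-swap case analysis — in (b), (c) and (d) — when a relevant two-coloured chain runs through $r$: there the uncolored edge cannot be recoloured, one must reroute the swap or read off a non-elementary multi-fan, and making the interaction between shifting and the partial order $\preceq_F$ explicit so that every case closes is where the bulk of the argument lies.
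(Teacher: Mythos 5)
Your part (a) is correct and in fact cleaner than the paper's (you finish by colouring the uncoloured edge $rw$ with $\alpha$, whereas the paper exhibits a non-elementary multi-fan), and your parts (b), (c) follow the paper's skeleton: a Kempe swap at $v_F(\beta)$ whose harmlessness is guaranteed by incomparability (resp.\ antisymmetry of $\preceq_F$). Two remarks there: the preliminary shift to $v_F(\alpha)$ is unnecessary — the paper swaps directly, observing that the two linear sequences ending at $v_F(\alpha)$ and $v_F(\beta)$ contain no edge coloured $\alpha$ or $\beta$ and hence survive the swap — and the step you explicitly defer (``one \dots checks that the resulting colouring reaches one of the contradictions'') is precisely the content of the proof, namely that after the swap both $v_F(\alpha)$ and $v_F(\beta)$ still lie in a common multi-fan with $\alpha$ missing at both. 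That can be completed along your lines, so I would not call (b), (c) wrong, only unfinished.

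Part (d), however, has a genuine error. You send each $\gamma\in\phibar(v)$ to the endpoint $s_{j(\gamma)}$ of the $\gamma$-coloured edge at $r$ and then try to prove that every such $s_{j(\gamma)}$ has degree $\Delta$. That intermediate claim is false: in a multi-fan $(r,e_0,s_0,e_1,s_1,e_2,s_2)$ with $\varphi(e_1)\in\phibar(s_0)$, $\varphi(e_2)\in\phibar(s_1)$ and $d(s_1)<\Delta$, taking $v=s_0$ and $\gamma=\varphi(e_1)$ gives $s_{j(\gamma)}=s_1$, which is not a $\Delta$-vertex. Your proposed contradiction also cannot be executed: the subcase ``$\mu\in\phibar(r)$'' is vacuous, since $\mu\in\phibar(s_{j(\gamma)})$ and $V(F)$ is elementary; and in the remaining case both $\gamma$ and $\mu$ are present at $r$, so $r$ is an interior vertex of its $(\gamma,\mu)$-chain and no $(\gamma,\mu)$-swap can make $\gamma$ missing at $r$, hence $rv$ can never be coloured $\gamma$. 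The paper's argument instead follows each $\gamma$-sequence all the way: by maximality its vertices stay in $F$, by elementarity it can always be prolonged from a vertex with a missing colour, so it must terminate at a vertex with $\phibar=\emptyset$, i.e.\ a $\Delta$-degree neighbour of $r$, and distinct colours of $\phibar(v)$ yield distinct such terminal vertices. You need the whole sequence, not just its first step; replacing your injectivity-of-first-steps argument by this termination/counting argument is the missing idea.
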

\begin{proof}
To prove (a) we assume $v_F(\alpha)=r$. Note that if $v_F(\beta)=r$, we are done by definition. So we may assume that $v_F(\beta)\neq r$. If $v_F(\alpha)$ and $v_F(\beta)$ are not $(\alpha,\beta)$-linked, then by $\beta\rightarrow \alpha$ at $v_F(\beta)$, we have a non-elementary multi-fan contradicting Lemma~\ref{vf1}. Thus (a) holds.

For (b) we assume that $\alpha$ and $\beta$ are incomparable along $\preceq_F$. Note that there are two linear sequences $L_1$ and $L_2$ at $r$ from $s_0$ to $v_F(\alpha)$ and $v_F(\beta)$, respectively. Since $\alpha$ and $\beta$ are incomparable along $\preceq_F$, and $v_F(\alpha)$ and $v_F(\beta)$ are the last vertices for $L_1$ and $L_2$ respectively, $L_1$ and $L_1$ does not contain any edge colored by $\alpha$ or $\beta$. Now by $\beta\rightarrow \alpha$ at $v_F(\beta)$, we have a new coloring and we denote the new coloring by $\varphi_1$. Since no edge in $L_1$ and $L_2$ is colored by either $\alpha$ or $\beta$ under $\varphi$,  $L_1$ and $L_2$ are still linear sequences under $\varphi_1$. Let $F'$ be a maximal multi-fan w.r.t. $e$ and $\varphi_1$. Then $L_1$ and $L_2$ are all contained in $F'$, giving a non-elementary multi-fan contradicting  Lemma~\ref{vf1}.

If (c) fails, since $\alpha\preceq_F\beta$, we can just do $\beta\rightarrow\alpha$ at $v_F(\beta)$ to get a non-elementary multi-fan contradicting Lemma~\ref{vf1}.

To see (d), we assume $\alpha\in\phibar(v)$ with $v\in V(F)$ and $v\neq r$. Since $V(F)$ is elementary by Lemma~\ref{vf1} and $F$ is maximal, every color $\alpha$ in $\phibar(v)$ induces at least one $\alpha$-sequence containing a unique $\Delta$-degree vertex in $F$, giving at least $|\phibar(v)|$ many $\Delta$-degree neighbors of $r$ in $F$. 
\end{proof}	
The following Vizing's Adjacency Lemma is a direct consequence of Lemmma~\ref{vf2}(d).

\begin{LEM}[Vizing's Adjacency Lemma(VAL)] Let $G$ be a class 2 graph with maximum degree $\Delta$. If $e=xy$ is a critical edge of $G$, then $x$ is adjacent to at least $\Delta-d(y)+1$ $\Delta$-vertices from $V(G)\setminus \{y\}$.
	\label{thm:val}
\end{LEM}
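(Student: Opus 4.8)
The plan is to derive the statement directly from Lemma~\ref{vf2}(d). Since $e = xy$ is a critical edge of the class~2 graph $G$, we have $\chi'(G-e) = \chi'(G) - 1 = \Delta$, so we may fix a coloring $\varphi \in \CC^\Delta(G-e)$. Put $r = x$, $s_0 = y$, $e_0 = e$, and let $F$ be a maximal multi-fan at $r$ with respect to $e$ and $\varphi$; such an $F$ exists because the length-one sequence $(r, e_0, s_0)$ is already a multi-fan, and one can keep extending it until maximality is reached.

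Since $s_0 = y$ is a vertex of $F$ distinct from $r = x$, Lemma~\ref{vf2}(d) tells us that $F$ contains at least $|\pbar(y)|$ vertices that are $\Delta$-degree neighbors of $r$. It then remains to evaluate $|\pbar(y)|$ and to check that these neighbors avoid $y$. The edges incident to $y$ other than $e$ number $d(y) - 1$ and receive pairwise distinct colors under $\varphi$, so $|\varphi(y)| = d(y) - 1$ and hence $|\pbar(y)| = \Delta - d(y) + 1$. The $\Delta$-degree neighbors of $r$ furnished by Lemma~\ref{vf2}(d) are vertices $s_i$ with $i \in [p]$, which by the definition of a multi-fan are all distinct from $s_0 = y$; thus they lie in $V(G) \setminus \{y\}$. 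Combining these observations, $x$ is adjacent to at least $\Delta - d(y) + 1$ $\Delta$-vertices of $V(G) \setminus \{y\}$, as claimed.

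The argument carries essentially no obstacle of its own: the real work is already packaged in Lemma~\ref{vf2}(d) (and ultimately in the elementariness of $V(F)$ coming from Lemma~\ref{vf1}). The only points requiring a moment of care are the color-counting identity $|\pbar(y)| = \Delta - d(y) + 1$ and the bookkeeping observation that the $\Delta$-neighbors counted inside the fan are precisely among $s_1, \dots, s_p$, none of which equals $y$.
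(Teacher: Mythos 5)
Your proposal is correct and follows exactly the route the paper intends: the paper states VAL as a direct consequence of Lemma~\ref{vf2}(d), and you simply instantiate that lemma with $r=x$, $v=s_0=y$ in a maximal multi-fan and do the routine bookkeeping $|\pbar(y)|=\Delta-d(y)+1$ (since $e$ is uncolored). The only point deserving slightly more care than you give it is why the $\Delta$-neighbors counted in (d) avoid $y$ when $d(y)=\Delta$; this holds because they arise from $\alpha$-sequences with $\alpha\in\pbar(y)$, whose vertices are joined to $r$ by colored edges, whereas $ry=e$ is uncolored.
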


Let $G$ be a class 2 graph, $e=rs_0$ be a critical edge and $F=(r,e_0,s_0,e_1, s_1, \ldots, e_p, s_p)$ be a maximum multi-fan at $r$ w.r.t. $e$, and let $\varphi\in \CC^\Delta(G-e)$ be the coloring where $F$ is obtained. We call a color $\beta$ a {\em stopping} color at $r$ if $r$ has a $\Delta$-degree neighbor $x$ with $\varphi(rx)=\beta$. Let $K$ be the set of all stopping colors at $r$.  Since $F$ is maximum and elementary, there exist some vertex $s_h\in V(F)$ and a stopping color $\beta$ such that $\beta\in\phibar(s_h)$. We let $K_F=K-\phibar(V(F))$ and call colors in $K_F$ {\em stopping colors outside} of $F$. By a slightly abuse of notation, in this paper, a {\it union} of two sequences $A$ and $B$, denoted by $A\cup B$, is the sequence obtained by joining the sequence $B$ to $A$ after the last element of $A$. We now fix a vertex $s_h\in V(F)$ with a stopping color $\beta\in\phibar(s_h)$. Let $F'$ be the union of all the $\varphi(rv)$-sequences outside of $F$, where $v$ is any vertex in the set $N_{<\Delta}(s_h)\cap N(r)$ with $\varphi(vs_h)\notin K_F$.  Then we call the sequence $F\cup F'$ an {\em extended multi-fan} w.r.t. $F$ and $s_h$. For simplification of notation, we did not indicate $s_h$ in the notation $F'$, even though $F'$ relies on a fixed vertex $s_h$. The following Lemma~\ref{extend} is a key lemma in our proof and it is a natural generalization of Lemmas~\ref{vf1} and~\ref{vf2} on $F\cup F'$. It is worth pointing out that Lemma~\ref{extend} can be easily generalized further along this direction if we allow $F'$ to be the union of all the $\varphi(rv)$-sequences outside of $F$ such that $v\in N_{<\Delta}(s_h)\cap N(r)$ with $\varphi(vs_h)\notin K_F$ for every vertex $s_h$ having any stopping color $\beta\in\phibar(s_h)$.
 
\begin{LEM}
	\label{extend}
Let $G$ be a class 2 graph, $e=rs_0$ be a critical edge and $F=(r,e_0,s_0,e_1,$ $ s_1, \ldots, e_p, s_p)$ be a maximum multi-fan at $r$ w.r.t. $e$, and let $\varphi\in \CC^\Delta(G-e)$ be the coloring where $F$ is obtained.  Let  $F\cup F'$ be an extended multi-fan w.r.t. $F$ and $s_h\in V(F)$, where $\beta$ is a stopping color with $\beta\in \phibar(s_h)$.  Then the following holds.
\begin{enumerate}[(a)]
	\item $V(F\cup F')$ is elementary.
	\item For two colors $1\in\phibar (r)$ and $\gamma\in\phibar(F\cup F')-K_F$, the vertices $r$ and $v_{F\cup F'}(\gamma)$ are $(1,\gamma)$-linked, where  $v_{F\cup F'}(\gamma)$ is the unique vertex in $V(F\cup F')$ with $\gamma\in\phibar(v_{F\cup F'}(\gamma))$.
	\item For each color $\gamma\in\phibar(V(F'))$, the vertices $s_h$ and $v_{F'}(\gamma)$ are $(\beta,\gamma)$-linked, where $v_{F'}(\gamma)$ is the unique vertex in $V(F')$  with $\gamma\in\phibar(v_{F'}(\gamma))$.
	\item Let $\gamma$ be a color in $\phibar(V(F'))\cap K_F$ and let $L=(r,rv_0,v_0,...,v_t)$ be a $\varphi(rv_0)$-sequence in $F'$ such that  $v_{F'}(\gamma)\in V(L)$. If $\varphi(s_hv_0)\neq 1$, then $r$ is $(1,\gamma)$-linked with $v_{F'}(\gamma)$, where $t\geq 0$ is an integer. If $\varphi(s_hv_0)=1$, then  $v_{F'}(\gamma)$ is $(\zeta,\gamma)$-linked with $v_F(\zeta)$ for any color $\zeta\in\phibar(V(F))\cap K$.
\end{enumerate}
\end{LEM}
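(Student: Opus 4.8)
The plan is to obtain all four statements by the same contradiction-and-Kempe-change scheme that proves Lemmas~\ref{vf1} and~\ref{vf2}, now run on the enlarged sequence $F\cup F'$. Three facts do the work: $e$ is critical, so under any $\varphi\in\CC^\Delta(G-e)$ the vertices $r$ and $s_0$ miss no common colour; $F$ is a \emph{maximum} multi-fan at $r$ w.r.t.\ $e$, so no recolouring of $G-e$ yields a multi-fan at $r$ on more than $|V(F)|$ vertices; and Lemmas~\ref{vf1} and~\ref{vf2}, which may be reapplied whenever a Kempe change leaves $F$ (or a controlled variant of it) a multi-fan. Each swap used below runs along an $(\alpha,\gamma)$-chain whose two colours lie in $\{1\}\cup\phibar(V(F\cup F'))$; because $V(F)$ is elementary and, by part~(a), so is $V(F\cup F')$, such a chain meets $V(F\cup F')$ only at its two endpoints and possibly at $r$, so performing it leaves $F$ and the family of $\varphi(rv)$-sequences essentially intact. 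Verifying this preservation case by case is the routine but unavoidable part; the genuine difficulty---and the step I expect to be the crux---is isolating the few configurations where a chain escapes through $r$ or through $s_h$, and showing that the stopping colour $\beta\in\phibar(s_h)$ together with the defining restriction $\varphi(vs_h)\notin K_F$ still forces a contradiction there.

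For~(a), I would enumerate the $\varphi(rv)$-sequences comprising $F'$ as $L_1,\dots,L_m$, set $F_0=F$ and $F_j=F_{j-1}\cup L_j$, and prove by induction on $j$ that $V(F_j)$ is elementary, the base case $j=0$ being Lemma~\ref{vf1}. Given $V(F_{j-1})$ elementary, $L_j$ is an extremal $\varphi(rv_j)$-sequence outside $F$, hence of Type~A, B, C, or~D. Type~C is excluded by the usual shift-and-recolour argument showing an extremal sequence outside a maximum multi-fan has no internal colour repetition: the repeated colour, lying outside $\phibar(V(F))$, is shifted along $L_j$ until it is missing at $r$, forcing a common missing colour at $r$ and $s_0$ and contradicting that $e$ is critical. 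Type~B is excluded because, shifting $L_j$ up to the vertex $v_i$ with $\phibar(v_i)\cap\phibar(V(F))\ne\emptyset$ and writing $\gamma$ for a colour in that intersection, the linear segment of $F$ reaching $v_F(\gamma)$ prolonged by the initial part of $L_j$ up to $v_i$ becomes a multi-fan at $r$ on more than $|V(F)|$ vertices---unless the $(\gamma,\cdot)$-chain this needs runs through $r$, in which case the $\Delta$-neighbour of $r$ coloured $\beta$ and the chain $C_{s_h}(\beta,\gamma,\varphi)$ reroute it to the same contradiction. In Types~A and~D one already has $V(L_j)\cup V(F)$ elementary, so it remains to rule out a clash between $V(L_j)$ and $V(L_1)\cup\dots\cup V(L_{j-1})$; a clash at the first offending vertex of $L_j$ is again converted, by a shift along $L_j$ and a single swap, into a multi-fan larger than $F$, or (when a chain leaves through $r$) into an unwanted common missing colour of $r$ and $s_0$. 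This closes~(a).

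Parts~(b) and~(c) then follow the template of Lemma~\ref{vf2} with~(a) as the contradiction target: if the asserted chain misses the claimed pair of vertices, swap its two colours at whichever of the pair lies in $V(F')$; this makes two distinct vertices of $F\cup F'$ miss a common colour, contradicting~(a)---provided the swap preserves the two properties that make $F\cup F'$ an extended multi-fan, namely that $F$ is still maximum and that $\beta$ is still a stopping colour at $s_h$. Concretely, for~(b) with $1\in\phibar(r)$ and $\gamma\in\phibar(V(F\cup F'))-K_F$, the swap $\gamma\to 1$ at $v_{F\cup F'}(\gamma)$ does it (the sub-case $\gamma\in\phibar(V(F))$ being literally Lemma~\ref{vf2}(a)), and for~(c) the swap $\gamma\to\beta$ at $v_{F'}(\gamma)$ makes $\beta$ missing at both $s_h\in V(F)$ and a vertex of $F'$.

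Part~(d) is the delicate one. Here $\gamma\in\phibar(V(F'))\cap K_F$ is a stopping colour outside $F$, so there is a $\Delta$-vertex $x_\gamma$ with $\varphi(rx_\gamma)=\gamma$, and the edge $rx_\gamma$ makes the $(1,\gamma)$-chain from $r$ sensitive to whether the edge $s_hv_0$ lies on it. When $\varphi(s_hv_0)\ne 1$, the chain cannot leave $s_h$ along that edge, the swap $\gamma\to 1$ at $v_{F'}(\gamma)$ is again safe, and the argument for~(b) applies verbatim to give that $r$ and $v_{F'}(\gamma)$ are $(1,\gamma)$-linked. When $\varphi(s_hv_0)=1$ that route is blocked, so one works instead with a stopping colour $\zeta\in\phibar(V(F))\cap K$: were $v_{F'}(\gamma)$ and $v_F(\zeta)$ not $(\zeta,\gamma)$-linked, the swap $\gamma\to\zeta$ at $v_{F'}(\gamma)$ would again create a common missing colour of two vertices of $F\cup F'$, contradicting~(a). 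The obstacle recurring through~(b)--(d)---and precisely the reason the statement carries the case split on $\varphi(s_hv_0)$ and the restriction $\varphi(vs_h)\notin K_F$---is showing that each of these swaps leaves intact the hypotheses needed to reinvoke~(a); that bookkeeping is where I expect most of the effort to go.
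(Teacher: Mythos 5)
Your treatment of parts (b)--(d) is essentially the paper's: in each case a single Kempe change at the offending vertex would, if the claimed linkage failed, produce a non-elementary extended multi-fan and contradict (a). The gap is in (a) itself, which is the heart of the lemma, and it sits exactly in the step you flag as ``the crux'' and then do not carry out. Your exclusion of Types~B and~C for the sequences $L_j$ cannot work as described, because it uses nothing about $F'$ beyond extremality of the sequences and maximality of $F$: if ``the usual shift-and-recolour argument'' ruled out internal colour repetition (Type~C) or a clash with $\phibar(V(F))$ (Type~B) for an \emph{arbitrary} extremal $\tau$-sequence outside a maximum multi-fan, those types would not occur at all, yet the paper's taxonomy admits them and the proof of Lemma~\ref{1} spends its Cases~2 and~3 handling precisely such sequences. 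Concretely, after shifting $L_j$ down to the offending vertex $v_i$, the colour $\tau=\varphi(rv_0)$ becomes missing at $r$ and at $v_0$; but since $v_0\notin V(F)$ and $\tau\notin\phibar(V(F))$, this gives neither a larger multi-fan nor a common missing colour of $r$ and $s_0$, and in the Type~C case you cannot recolour $rv_i$ with the repeated colour $\alpha$, since $\alpha\notin\phibar(r)$.

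What actually closes (a) in the paper is the structure your argument never invokes: the first vertex $v_0$ of each sequence in $F'$ is a common neighbour of $r$ and $s_h$ with $\varphi(v_0s_h)=\gamma\notin K_F$. After reducing (as you do) to $1\in\phibar(x_1)$ for a vertex $x_1$ on a $\tau$-sequence, the paper shifts that sequence and colours the new terminal edge at $r$ by $1$, so that $\tau\in\phibar(r)\cap\phibar(v_0)$; it then performs $\tau\to\beta$ at $v_0$ (legal because $r$ and $s_h$ are $(\tau,\beta)$-linked by Lemma~\ref{vf2}(a)), obtaining $\beta\in\phibar(v_0)\cap\phibar(s_h)$; and finally it plays the edge $v_0s_h$: if $\gamma\notin\phibar(V(F))$ then $\gamma\notin K$ and a $(\beta,\gamma)$-swap at $s_h$ enlarges the maximum multi-fan, while if $\gamma\in\phibar(s_i)$ then recolouring $v_0s_h$ with $\beta$ puts $\gamma$ into $\phibar(s_h)\cap\phibar(s_i)$, a non-elementary multi-fan. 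Without this use of the edge $v_0s_h$ and of the hypothesis $\varphi(v_0s_h)\notin K_F$, the contradiction in (a) does not materialise, so the proposal as written does not prove the lemma.
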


The proof of Lemma~\ref{extend} will be given in Section~\ref{pextend}. 
We call a vertex $r$ \emph{light} if $d(r)=\Delta$ and $d_{G_\D}(r)=\delta(G_\D)$. The next lemma is the main tool used in the proof of Theorem~\ref{main}.
\begin{LEM}\label{1}
	Let $G$ be a critical class 2 graph with $\delta(G_\D)=k$, $|V(G)|=n$, and $\Delta\geq \frac{2}{3}n+\frac{3k}{2}$, let $r$ be a light vertex, and let $e=rs$ be a critical edge with $d(s)\leq\Delta-1$. Let $\varphi\in \CC^{\D}(G-e)$ be a coloring under which $F$ is a maximum multi-fan centered at $r$. Then all vertices of degree at least $\Delta-k+1$ form an elementary set under $\varphi$. 
\end{LEM}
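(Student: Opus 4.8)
The plan is to build, at the light vertex $r$, an extended multi-fan $F\cup F'$ with respect to a maximum multi-fan $F$ and some $s_h\in V(F)$ carrying a stopping color $\beta$, and to argue that every vertex of degree at least $\Delta-k+1$ must already lie in $V(F\cup F')$; then Lemma~\ref{extend}(a) gives elementariness for free. The first step is a counting step: since $r$ is light, $d_{G_\Delta}(r)=k$, so $r$ has exactly $k$ neighbors of maximum degree and $\Delta-k$ neighbors of degree $\le \Delta-1$. By Lemma~\ref{vf2}(d) every color missing at a vertex $v\in V(F)\setminus\{r\}$ forces a distinct $\Delta$-degree neighbor of $r$ inside $F$, so $\sum_{v\in V(F)\setminus\{r\}}|\pbar(v)|\le k$; combined with $|\pbar(v)|\ge \Delta-d(v)+1\ge 1$ this bounds $|V(F)|$ and, more importantly, controls how few colors are ``used up'' by $F$. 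The key numerical input is that the hypothesis $\Delta\ge \tfrac23 n+\tfrac{3k}{2}$ forces $\Delta$-degree vertices, and more generally vertices of degree $\ge \Delta-k+1$, to be highly concentrated: such a vertex $u$ has $|\pbar(u)|\le k$, so $u$ has at most $k$ non-neighbors other than itself among the at most... — precisely, $|V(G)\setminus N[u]|\le n-1-(\Delta-k+1)=n-\Delta+k-2$, which under the degree bound is at most $\tfrac{n}{3}-\tfrac{k}{2}-2$, a small quantity compared to $\Delta$.

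Next I would show that any vertex $u$ with $d(u)\ge \Delta-k+1$ that is \emph{not} in $V(F\cup F')$ leads to a contradiction. The idea is the standard one: if $u\notin V(F\cup F')$ but $\pbar(u)\cap\pbar(V(F\cup F'))\ne\emptyset$, we can recolor along a Kempe chain supplied by Lemma~\ref{extend}(b)--(d) to enlarge the fan structure, contradicting maximality of $F$; and if $\pbar(u)$ is disjoint from $\pbar(V(F\cup F'))$ then we do a direct counting contradiction, because $V(F\cup F')$ together with $u$ would have to be elementary, hence $|\pbar(V(F\cup F'))|+|\pbar(u)|\le \Delta$, while the extended fan is large. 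Concretely, $F'$ collects the $\varphi(rv)$-sequences over all low-degree neighbors $v$ of $r$ adjacent to $s_h$ with $\varphi(vs_h)\notin K_F$; since $r$ has $\Delta-k$ low-degree neighbors and $s_h$ has at most $n-\Delta+k-2$ non-neighbors, a large fraction of these $\Delta-k$ neighbors is adjacent to $s_h$, and the set $K_F$ of stopping colors outside $F$ has size at most $k$ (each stopping color outside $F$ is $\varphi(rx)$ for a $\Delta$-neighbor $x$ of $r$ not accounted for in $F$, and there are $\le k$ such). So $F'$ has on the order of $\Delta-O(n-\Delta+k)$ vertices, which by the degree hypothesis exceeds $n-\Delta+k$, the room available for a vertex disjoint from an elementary set — contradiction. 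This is where the coefficient $3/2$ on $k$ and $2/3$ on $n$ are exactly calibrated.

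The one subtlety I expect to be the main obstacle is handling the vertices $u$ with $d(u)\ge\Delta-k+1$ that are \emph{not} $\Delta$-vertices and not obviously reachable by a linear sequence from $r$: these are not directly governed by the VAL-type Lemma~\ref{vf2}(d), and the extended fan $F'$ as defined only reaches through low-degree neighbors of $r$ adjacent to the specific vertex $s_h$. To catch such a $u$, I would argue that $u$ must be adjacent to $r$ (if not, $u$ has $\ge \Delta-k+1$ neighbors but at most $n-\Delta+k-2<\Delta-k+1$ of $V(G)$ lies outside $N[r]\cup\{u\}$ when also counting from $r$'s side, forcing $u\in N(r)$ — this needs the degree bound applied to both $r$ and $u$ simultaneously), and then $u$ is either a $\Delta$-neighbor of $r$, placing it in $V(F)$ by maximality if its edge-color to $r$ is not a stopping color, or it is reachable as the endpoint of a $\varphi(ru)$-sequence. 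The bookkeeping of which colors are stopping colors versus colors missing in $V(F)$, and making sure the Kempe swaps from Lemma~\ref{extend} preserve the extended-fan structure, is the delicate part; everything else is arithmetic with the bound $\Delta-k\ge \tfrac13 n+\tfrac k2$.
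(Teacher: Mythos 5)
Your overall plan---show that every vertex of degree at least $\Delta-k+1$ already lies in $V(F\cup F')$ and then invoke Lemma~\ref{extend}(a)---cannot work, and the two arguments you offer to rescue it both fail. First, every vertex of a linear sequence at $r$ is a neighbor of $r$, so $V(F\cup F')\subseteq N(r)\cup\{r\}$; but a vertex $u$ with $d(u)\geq \Delta-k+1$ need not be adjacent to $r$ at all. Your adjacency-forcing step is invalid: two vertices of high degree are not forced to be adjacent (each of $N(u)$ and $N(r)$ fits inside $V(G)\setminus\{u,r\}$ whenever $\Delta\leq n-2$), and indeed the proof of Theorem~\ref{main} explicitly has to handle vertices outside $N(r)$. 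Second, your fallback counting contradiction for the case $\pbar(u)\cap\pbar(V(F\cup F'))=\emptyset$ points the wrong way: the extended fan contributes only about $|\pbar(V(F\cup F'))|\approx n/3+2k$ missing colors, a vertex of degree at least $\Delta-k+1$ misses at most $k$ colors, and $n/3+3k\leq\Delta$ under the hypothesis, so there is no numerical obstruction to $V(F\cup F')\cup\{u\}$ being elementary. The elementary-set counting only bites for \emph{low}-degree vertices, which miss many colors; high-degree vertices are exactly the ones that slip through it.

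The paper's actual proof is structured differently and you would need its main mechanism. One assumes two vertices $x,x'$ of degree at least $\Delta-k+1$ share a missing color $\alpha$, uses Lemma~\ref{vf1} to place (say) $x$ outside $V(F)$ and transfers a color $1\in\pbar(r)$ to $x$ by a Kempe change. The counting (your first paragraph is close to this, and to the paper's Claim~1) is then used not to put $x$ into the fan but to show $x$ sends at least $k+1$ edges to low-degree vertices \emph{inside} $V(F\cup F')$. Since $|K_F|<k$, among the colors of these edges one finds a color that is either missing in $V(F\cup F')$, or realized by a non--Type-D sequence outside $F$, or gives two Type-D sequences ending at the same stopping color; each of the three cases is then killed by a chain of carefully ordered Kempe swaps (using Lemmas~\ref{vf2} and~\ref{extend} to control which chains pass through $r$ or $s_h$). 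That case analysis is the substance of the lemma, and your proposal does not engage with it.
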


\section{Proof of Theorem~\ref{main}}
 \begin{proof}
 		Let $G$ be a $\Delta$-critical graph of order $n$ and $k\geq 2$ be a positive integer. Furthermore, we assume $\Delta\geq \frac{2}{3}n+\frac{3k}{2}$ with $\delta(G_{\Delta})\leq k$. Since $\Delta\geq\frac{2}{3}n+\frac{3k}{2}\geq\frac{2}{3}n+\frac{3\delta(G_{\Delta})}{2}$, we will just take $\delta(G_{\Delta})=k$ in this proof. Let $r$ be a light vertex of $G$ and $s$ be a neighbor of $r$ with $d(s)\leq \Delta-1$. Then the edge $rs$ is a critical edge. Let $\varphi$ be a $\Delta$-edge-coloring of $G-rs$ and $F$ be a maximum multi-fan centered at $r$. We first claim that if $V(G)$ is elementary under $\varphi$, then $G$ is overfull. Indeed, if $G$ is elementary, then each color can only be missing at most once for vertices in $V(G)$. Since $r$ has at least one missing color, $n$ must be odd as any color missing at $r$ induces a matching of $G-r$. Therefore, each color must be missing exactly once in $G$ as $n$ is odd. Thus $G$ has exactly $(\frac{n-1}{2})\Delta+1$ many edges since we have $\Delta$ many color classes and the edge $rs$ is uncolored. So $G$ is overfull as we claimed. 
 		
 		Now we shall show that $V(G)$ is elementary to confirm that $G$ is overfull in the reminder of this section.  By Lemma~\ref{1}, all vertices with degree at least $\Delta-k+1$ form an elementary set, so we are done if there's no vertex of degree less than $\Delta-k+1$. Thus we assume otherwise that there is a vertex $x$ with $d(x)\leq \Delta-k$. Since $|N_{\Delta}(r)|=k$, all the vertices in $N(r)$ have degree at least $\Delta-k+1$ by Lemma~\ref{thm:val}(VAL). So $x\notin N(r)$. 
 		
 		We claim that $d(x)\geq \frac{n}{3}+2k$. Since every edge in $G$ is critical, $x$ is adjacent to at least one maximum degree vertex in $G$ by VAL. Let $u$ be a maximum degree vertex with $ux\in E(G)$. Then $u\neq r$ as $x\notin N(r)$. Since $d(u)=\Delta$, we have $|N(u)\cap N(r)|\geq d(u)+d(r)-|N(u)\cup N(r)|\geq \Delta+\Delta-n\geq \frac{4n}{3}+3k-n\geq \frac{n}{3}+3k$. Since $|N_{\Delta}(r)|=k$, we have $|N_{<\Delta}(u)|\geq \frac{n}{3}+2k$, and therefore $|N_{\Delta}(u)|\leq \Delta-\frac{n}{3}-2k$. Since $ux$ is a critical edge, we have $|N_{\Delta}(u)|\geq\Delta-d(x)+1$. So $d(x)\geq \frac{n}{3}+2k$ as claimed.
 		
 		Since $N_{\Delta}(r)=k$, we have $d(v)\geq\Delta-k+1$ for each vertex $v\in N_{<\Delta}(r)$ by VAL. Recall that by Lemma~\ref{1}, all vertices with degree at least $\Delta-k+1$ form an elementary set.  As $s\in N_{<\Delta}(r)$, we have $|\phibar(N_{<\Delta}(r))|\geq |N_{<\Delta}(r)|+1\geq\Delta-k+1$. Since $d(x)\geq \frac{n}{3}+2k$, we have $|N(r)\cap N(x)|\geq \Delta+\frac{n}{3}+2k-n\geq \frac{7k}{2}$. Because $|N_{\Delta}(r)|=k$, it follows that $|N_{<\Delta}(r)\cap N_{<\Delta}(x)|\geq \frac{5k}{2}$. Since $|\phibar(N_{<\Delta}(r))|\geq\Delta-k+1$ and all edges between $x$ and vertices $N_{<\Delta}(r)\cap N_{<\Delta}(x)$ are colored differently, 
 		there is a vertex $v\in N_{<\Delta}(r)\cap N_{<\Delta}(x)$ such that $\varphi(vx)=\beta\in\phibar(w)$ where $w\in N_{<\Delta}(r)$. Since $d(x)\leq \Delta-k$, $|\phibar(x)|\geq k$. Since $|\phibar(N_{<\Delta}(r))|\geq \Delta-k+1$, $|\phibar(x)\cap \phibar(N_{<\Delta}(r))|\geq 1$. Thus, there exists $\alpha\in\phibar(x)\cap\phibar(u')$ where $u'\in N_{<\Delta}(r)$. So $d(u')\geq \Delta-k+1$. Let $1\in\phibar(r)$.  We claim that $u'$ is $(1,\alpha)$-linked with $r$. Otherwise, we have $u'\notin V(F)$ by Lemma~\ref{vf2}(a). Thus $F$ stays as a maximum multi-fan after $1\rightarrow\alpha$ at $u'$. However, we have a contradiction to Lemma~\ref{1} as now $1\in\phibar(r)\cap\phibar(u')$ and $d(u')\geq \Delta-k+1$. Thus we have as claimed. Then $x$ is not $(1,\alpha)$-linked with $r$, and $x\notin V(F)$ by Lemma~\ref{vf2}(a). Thus after doing $\alpha\rightarrow 1$ at $x$, $F$ stays as a maximum multi-fan.  Now we let $\gamma\in\phibar(v)$. Similarly as earlier, we see that $v$ is $(1,\gamma)$-linked with $r$, as otherwise we can do $\gamma\rightarrow 1$ at $v$ and reach a contradiction with Lemma~\ref{1}. Hence $x\notin V(F)$ by Lemma~\ref{vf2}(a). Thus we do $1\rightarrow \gamma$ at $x$. Now $\gamma\in\phibar(x)\cap\phibar(v)$ and we recolor the edge $vx$ by $\gamma$. Note that $F$ stays as a maximum multi-fan after these two operations. As a result, we have $\beta\in\phibar(v)\cap\phibar(w)$, a contradiction to Lemma~\ref{1}. Therefore, $G$ has no vertex of degree less than $\Delta-k+1$ and $G$ is elementary by Lemma~\ref{1}.
 	
 \end{proof}	

\section{Proof of Lemma~\ref{extend}}\label{pextend}
\begin{proof}

Let $G$ be a class 2 graph, $e=rs_0$ be a critical edge and $F=(r,e_0,s_0,e_1, s_1, \ldots, e_p, s_p)$ be a maximum multi-fan at $r$ w.r.t. $e$, and let $\varphi\in \CC^\Delta(G-e)$ be the coloring where $F$ is obtained. Let $F\cup F'$ be an extended multi-fan as defined earlier using the vertex $s_h$ with $0\leq h\leq p$. Let $1\in\phibar(r)$ and $\beta\in\phibar(s_h)\cap K$.

We first prove (a).  Assume otherwise that there exist $\alpha\in \phibar(x_1)\cap\phibar(x_2)$ with $x_1,x_2\in V(F\cup F')$. 
We first assume that $\alpha\notin K_F$. Since only one of $x_1,x_2$ is $(1,\alpha)$-linked with $r$, so we assume that $x_1$ is not $(1,\alpha)$-linked with $r$. By Lemma~\ref{vf1}, $x_1\notin V(F)$.  By the definition of $F'$, $x_1$ must be a vertex along a $\tau$-sequence $L$ such that $\varphi(rv)=\tau$ for a vertex $v$ in $N_{<\Delta}(r)$ and $\varphi(vs_h)\notin K_F$, where $K_F$ is the set of stopping colors outside of $F$. In this case, we do $\alpha\rightarrow 1$ at $x_1$.  Note that $\varphi(vs_h)$ may be changed when we did $\alpha\rightarrow 1$ at $x_1$, but we still have $\varphi(vs_h)\notin K_F$ because $1,\alpha\notin K_F$. Moreover, if $L$ contains an edge colored by $\alpha$ and a vertex $x_3\prec_L x_1$ with $\alpha\in\phibar(x_3)$ such that $x_1$ and $x_3$ are $(1,\alpha)$-linked, $x_1$ may no longer belong to the corresponding $\tau$-sequence $L$ after the color switching at $x_1$. Nonetheless, we shift the $\tau$ sequence from $v$ to $x_3$ and color $rx_3$ by $1$ if the earlier mentioned $x_3$ exists, and we shift the $\tau$ sequence from $v$ to $x_1$ and color $rx_1$ by $1$ if otherwise. Now $\tau\in\phibar(v)\cap\phibar(r)$ and $F$ is still a multi-fan.  Recall that $\beta\in\phibar(s_h)\cap K$. Let $\gamma=\varphi(vs_h)$. So $\gamma\notin K_F$. By Lemma~\ref{vf2}(a), $s_h$ and $r$ are $(\tau, \beta)$-linked. Then we do $\tau\rightarrow \beta$ at $v$.  As a result, $\beta\in\phibar(s_h)\cap\phibar(v)$ and the edge $vs_h$ is still colored by $\gamma$. If $\gamma\notin \phibar(F)$, then by $\beta\rightarrow\gamma$ at $s_h$ we have a larger multi-fan as $\gamma\notin K_F$, a contradiction with $F$ being maximum w.r.t. $e$. Thus we may assume $\gamma\in\phibar(s_i)$ for a vertex $s_i\in V(F)$.  Now we recolor the edge $vs_h$ by $\beta$, and $\gamma$ becomes a missing color at $s_h$. Since $\beta\in K$, we then have a non-elementary multi-fan containing both $s_h$ and $s_i$, a contradiction to Lemma~\ref{vf1}.

 Now we assume that $\alpha\in K_F$. Recall that $\beta\in\phibar(s_h)\cap K$. Similarly as earlier, we assume that $x_1$ is not $(\alpha,\beta)$-linked with $s_h$, and  $x_1$ is added to $F'$ through a $\tau$-sequence at $r$ starting from $rv$. We do $\alpha\rightarrow\beta$ at $x_1$. Since $x_1$ is not $(\alpha,\beta)$-linked with $s_h$ and $\alpha,\beta\in K$, this process does not change the colors on $vs_h$ and $rv$, and $x_1$ still belongs to a $\tau$-sequence at $r$ starting from $rv$. Since $s_h$ is $(1,\beta)$-linked with $r$, we can do $\beta\rightarrow 1$ at $x_1$ and this process still keeps the colors of $vs_h$ and $rv$, and $x_1$ still belongs to a $\tau$-sequence at $r$ starting from $rv$.  We then have $1\in\phibar(x_1)\cap\phibar(r)$ and returned to the previous case of $\alpha\notin K_F$ with $1$ in place of $\alpha$.

 To see (b), we just do $\gamma\rightarrow 1$ at $v_{F\cup F'}(\gamma)$ if (b) fails. Since $v_{F\cup F'}(\gamma)$ and $r$ are not $(1,\gamma)$-linked, this Kempe change does not involve any edge of $F\cup F'$. Although this Kempe change may change the color on some edge $vs_h$ where a $\varphi(rv)$-sequence is contained in $F'$ by the definition of $F'$,  since $1,\gamma\notin K_F$, we still have that the color on the edge $rv$ is not contained in $K_F$, and as a result, $v_{F\cup F'}(\gamma)$ now belongs to a non-elementary extended multi-fan (it may be different from $F\cup F'$, but it still contains the vertex $v_{F\cup F'}(\gamma)$), giving a contradiction to (a). 

The proof of (c) is similar to the proof of (b), as we can do $\gamma\rightarrow\beta$ at $v_{F'}(\gamma)$ if (c) fails. Since $\beta\in K$, $\beta\in\phibar(s_h)$ and $v_{F'}(\gamma)$ is not $(\beta,\gamma)$-linked with $s_h$, $v_{F'}(\gamma)$ now belongs to a non-elementary extended multi-fan (again it may be different from $F\cup F'$), a contradiction to (a).

If the first part of (d) fails, then by $\gamma\rightarrow 1$ at $v_{F'}(\gamma)$, we have a non-elementary extended multi-fan containing the vertex $v_{F'}(\gamma)$, a contradiction to (a). If the second part of (d) fails, then similarly we do $\gamma\rightarrow \zeta$ at $v_{F'}(\gamma)$ and get a  non-elementary extended multi-fan containing the vertex $v_{F'}(\gamma)$. Note that in both parts after the operation on $v_{F'}(\gamma)$, the extended multi-fan may be different from $F\cup F'$, but it will still contain the vertex $v_{F'}(\gamma)$.

\end{proof}
\section{Proof of Lemma~\ref{1}}
\begin{proof}
Let $G$, $k$, $s=s_0$, $r$, and the maximum multi-fan $F=(r,e_0,s_0,e_1, s_1, \ldots, e_p, s_p)$ be as defined in Lemma~\ref{1} under the coloring $\varphi\in \CC^{\D}(G-e_0)$. Recall that $K$ is the set of stopping colors at $r$ and $K_F=K-\phibar(V(F))$ is the set of stopping colors outside of $F$. Then $|K|=k$ as $r$ has core degree $k$. We denote $|K_F|$ by $k'$. To prove Lemma~\ref{1}, we assume otherwise that there are two vertices $x,x'$ with degree at least $\Delta-k+1$ such that $\alpha\in\phibar(x)\cap\phibar(x')$. Since $r$ is $(1,\alpha)$-linked with exactly one vertex, we may assume that $x$ is not $(1,\alpha)$-linked with $r$. By Lemma~\ref{vf1}, $x\notin V(F)$. Thus we do $\alpha\rightarrow 1$ at $x$. As a result, $1\in\phibar(x)$. Assume $\beta\in K\cap\phibar(s_h)$ for some $h$ with $0 \leq h\leq p$.  Let $F\cup F'$ be an extended multi-fan defined with $s_h$ where $F'$ is a collection of all the $\tau$-sequences outside of $F$ such that $\tau=\varphi(rv)$ and $v\in N_{<\Delta}(s_h)\cap N(r)$ with $\varphi(vs_h)\notin K_F$. Since $|N_{\Delta}(r)|=k$ and $rs$ is critical, all neighbors of $r$ have degree at least $\Delta-k+1$ by Theorem~\ref{thm:val}(VAL). Thus all vertices in $V(F\cup F')$ have degree at least $\Delta-k+1$.  We have the following claim.

\flushleft {\bf Claim 1.} $|N_{<\Delta}(x)\cap V(F\cup F')|\geq k+1$.

By Lemma~\ref{vf2}(d), $F$ contains at least $|\phibar(s_h)|$ many $\Delta$-neighbors of $r$. Thus we have $k'\leq k-|\phibar(s_h)|$.
Since $|\phibar(s_h)|=\Delta-d_G(s_h)$ when $h>0$ and $|\phibar(s_h)|=\Delta-d_G(s_h)+1$ when $h=0$, we have $d_G(s_h)$$\geq \Delta-|\phibar(s_h)|$, and therefore $d_G(s_h)$$\geq \Delta-(k-k')$$=\Delta-k+k'$. So $|N(r)\cap N(s_h)|$$\geq d_G(r)+d_G(s_h)-|V(G)|$$=\Delta+\Delta-k+k'-n$$\geq n/3+2k+k'$. Since there are at most $k'$ neighbors of $s_h$ that are joined to $s_h$ by colors in $K_F$, we have $|V(F\cup F')|\geq n/3+2k$. Because $r$ has exactly $k$ many $\Delta$-neighbors, $F\cup F'$ has at least $n/3+2k-k=n/3+k$ many vertices with degree less than $\Delta$. As $d_G(x)$ $\geq\Delta-k+1$ $\geq 2n/3-k+1$,  we have $|N_{<\Delta}(x)\cap V(F\cup F')|$ $\geq 2n/3-k+1+n/3+2k-n$ $=k+1$, as desired.

By considering the colors on edges joining $x$ and vertices in $N_{<\Delta}(x)\cap V(F\cup F')$, we have the following three cases.

\flushleft {\bf Case 1.} There is a vertex $u\in N_{<\Delta}(x)\cap V(F\cup F')$ such that $\tau=\varphi(xu)\in\phibar(V(F\cup F'))$.

We first assume that $u\in V(F)$. Now if $\tau\in\phibar(V(F))$ and there is a color $\gamma\in\phibar(u)$ such that $\gamma$ and $\tau$ are incomparable along $\preceq_F$,
 we just do $1\rightarrow\gamma$ at $x$ and get a contradiction with Lemma~\ref{vf2}(b), as $u$ is $(1,\gamma)$-linked with $r$ by Lemma~\ref{vf2}(a) before the operation and $u$ is $(\gamma,\tau)$-linked with $x$ in the resulting coloring. If $\tau\in\phibar(V(F))$ and there is a color $\gamma\in\phibar(u)$ such that $\tau\prec_F\gamma$ along $\preceq_F$,
 we similarly do $1\rightarrow\gamma$ at $x$ and get a contradiction with Lemma~\ref{vf2}(c). 
  In the case $\tau\in\phibar(V(F))$ and there is a color $\gamma\in\phibar(u)$ such that $\gamma\prec_F\tau$ along $\preceq_F$, we simply do a shifting from $rs$ to $rv_F(\tau)$ and uncolor the edge $rv_F(\tau)$. As a result, there exists a color in $\phibar(u)$ that is incomparable with $\tau$ and we reach an earlier case. 
  
  We now consider the case that $\tau\in\phibar(V(F'))$. If $\tau\notin K_F$, then we do $1\rightarrow \tau$ at $x$. Since $r$ is $(1,\tau)$-linked with $v_{F'}(\tau)$ by Lemma~\ref{extend}(b), and the set $\{s\in N(s_h):\varphi(ss_h)\notin K_F\}$ stays the same, it is easy to see that $F\cup F'$ is still an extended multi-fan. Similarly by Lemma~\ref{extend}(c), $s_h$ is $(\beta,\tau)$-linked with $v_{F'}(\tau)$.  We then do $\tau\rightarrow\beta$ at $x$. Note that $F$ is still a multi-fan under this new coloring. So by $\beta\rightarrow 1$ at $x$, we reach the earlier case of $\varphi(xu)\in\phibar(V(F))$, because $s_h$ is $(1,\beta)$-linked with $r$. For readers' convenience, in the remainder of this paper we will only give operations performed without repeating each time in details Lemmas~\ref{vf2} and~\ref{extend}, the set $\{s\in N(s_h):\varphi(ss_h)\notin K_F\}$, and the resulting extended multi-fan.  Now if $\tau\in K_F$, we do $1\rightarrow\beta\rightarrow\tau$ at $x$. Since $|\phibar(s_0)|\geq 2$, by Lemma~\ref{vf2}(d), there exists a color $\zeta\in\phibar(V(F))\cap K$ with $\zeta\neq\beta$.
  If $v_{F'}(\tau)$ is obtained through a linear sequence with first vertex $v$ joined to $s_h$ by the color $1$, we do $\tau\rightarrow \zeta\rightarrow 1$ at $x$ following Lemma~\ref{extend}(d) and Lemma~\ref{vf2}(a), where we reach the previous case of $\varphi(xu)\in\phibar(V(F))$. Note that here $\varphi(vs_h)$ might be changed to $\zeta$ and the set $\{s\in N(s_h):\varphi(ss_h)\notin K_F\}$ might change, but $u$ stays in an extended multi-fan in the new coloring.  If $v_{F'}(\tau)$ is obtained through a linear sequence with first vertex $v$ joined to $s_h$ by a color other than $1$, we do $\tau\rightarrow 1$ at $x$ following Lemma~\ref{extend}(d), where we reach the previous case of $\varphi(xu)\in\phibar(V(F))$. Here the set $\{s\in N(s_h):\varphi(ss_h)\notin K_F\}$ might change, but $u$ stays in an extended multi-fan in the new coloring.

We then assume that $u\in V(F')$. Let $\gamma$ be a color in $\phibar(u)$. If $\gamma\in K_F$, we can just do $1\rightarrow\beta\rightarrow\gamma\rightarrow\tau$ at $x$ and get a non-elementary extended multi-fan, a contradiction to Lemma~\ref{extend}(a). Therefore, we may assume $\gamma\notin K_F$. If $v_{F'}(\tau)$ and $u$ do not belong to the same linear sequence $L$ added to $F'$ with $u\prec_L v_{F'}(\tau)$, then we have a non-elementary extended multi-fan by $1\rightarrow \gamma\rightarrow \tau$ at $x$. Thus we may assume $v_{F'}(\tau)$ and $u$ are both belong to a linear sequence $L$ added to $F'$ with $u\prec_L v_{F'}(\tau)$. Let the first vertex of $L$ be $v$. Since $|\phibar(s_0)|\geq 2$, by Lemma~\ref{vf2}(d), there exists a color $\zeta\in\phibar(V(F))\cap K$ with $\zeta\neq\beta$. We first do $1\rightarrow\beta\rightarrow\tau$ at $x$. Now similarly as before, if $\varphi(vs_h)=1$, we then do $\tau\rightarrow\zeta\rightarrow 1$ at $x$ following Lemma~\ref{extend}(d) to reach the previous case as $\varphi(xu)=\beta\in\phibar(V(F))$.  If $\varphi(vs_h)\neq 1$, we then do $\tau\rightarrow 1$ at $x$ following Lemma~\ref{extend}(d) to reach the previous case as $\varphi(xu)=\beta\in\phibar(V(F))$.

\flushleft{\bf Case 2.} There is a vertex $u\in N_{<\Delta}(x)\cap V(F\cup F')$ such that 
 $\tau=\varphi(xu)\notin\phibar(V(F\cup F'))$ and not all the $\tau$-sequence outside of $F$ is of Type D. 

By the assumption of this case,  there is a $\tau$-sequence $L$ outside of $F$ which is of Type A, B, or C. Let $\gamma\in\phibar(u)$. We first assume that $u\in V(F)$. Now if $L$ is of Type A or C, then by doing $1\rightarrow\gamma\rightarrow\tau$ at $x$, we have a non-elementary multi-fan contradicting Lemma~\ref{1}. If $L$ is of Type B with $\phibar(V(L))\cap \{1,\gamma\}\neq\emptyset$, then by doing $1\rightarrow\gamma\rightarrow\tau$ at $x$, we still have $\phibar(V(L))\cap \{1,\gamma\}\neq\emptyset$, reaching a contradiction by resulting either a larger multi-fan or a non-elementary multi-fan. Now the remaining case is that $L$ is of Type B, and there exists a color $\eta\in\phibar(V(F))\cap\phibar(V(L))$ with $\eta\notin\{1,\gamma\}$. If $u\npreceq_F v_F(\eta)$ along $\preceq_{F}$, then by doing $1\rightarrow\gamma\rightarrow\tau$ at $x$, we have a non-elementary multi-fan contradicting Lemma~\ref{1}. If $u\preceq_F v_F(\eta)$ along $\preceq_{F}$, then we  simply do a shifting from $rs$ to $rv_F(\eta)$ and uncolor the edge $rv_F(\eta)$, reaching the earlier case of $u\npreceq_F v_F(\eta)$.

We then assume $u\in V(F')$. We first consider the case that $L$ is not of Type B with $\{1\}=\phibar(V(L))\cap\phibar(V(F))$, or $L$ is of Type B with $\{1\}=\phibar(V(L))\cap\phibar(V(F))$ and $x\in V(L)$. If $\gamma\notin K_F$, we just do $1\rightarrow\gamma\rightarrow\tau$ at $x$ to get a non-elementary extended multi-fan, a contradiction to Lemma~\ref{extend}(a). Therefore, we have $\gamma\in K_F$. If $L$ is not of Type B with $\{\beta\}=\phibar(V(L))\cap\phibar(V(F))$, we have a non-elementary extended multi-fan by $1\rightarrow\beta\rightarrow\gamma\rightarrow \tau$ at $x$, a contradiction to Lemma~\ref{vf2}(a). Thus we may assume $L$ is of Type B with $\{\beta\}=\phibar(V(L))\cap\phibar(V(F))$. Suppose that $u$ is added to $F'$ by a linear sequence $L'$ with first vertex $v$. Again since $|\phibar(s_0)|\geq 2$, by Lemma~\ref{vf2}(d), there exists a color $\zeta\in\phibar(V(F))\cap K$ with $\zeta\neq\beta$. Now if $\varphi(vs_h)\neq 1$, we do $1\rightarrow\gamma\rightarrow\tau$ at $x$ to get a non-elementary extended multi-fan, and if $\varphi(vs_h)=1$, we do $1\rightarrow\zeta\rightarrow\gamma\rightarrow\tau$ at $x$ to get a non-elementary extended multi-fan, both give contradictions to Lemma~\ref{vf2}(a).

Thus we can assume $L$ is of Type B with $\{1\}=\phibar(V(L))\cap\phibar(V(F))$ and there is a vertex $z\in V(L)$ with $1\in\phibar(z)$ and $z\neq x$. Clearly $\tau\notin K_F$ as $L$ is of Type B. By the definition of maximal linear sequences outside of $F$, $z$ is the last vertex of $L$.  Let the first vertex of $L$ be $w$. Note that here $z$ and $w$ could be the same vertex.
Clearly $F'$ and $L$ do not share common vertices, as otherwise $z\in V(F')$ and we have a non-elementary extended multi-fan. Recall that $\gamma\in\phibar(u)$. Now we do $1\rightarrow\beta$ at both $x$ and $z$ following Lemma~\ref{vf2}(a). As a result, $\beta\in\phibar(z)\cap\phibar(x)$. Note that $\varphi(rw)=\tau$ and $d(w)<\Delta$, so $\tau\notin K_F$. Therefore, $s_h$ and $r$ must be $(\beta,\tau)$-linked, as otherwise we have a larger multi-fan by interchanging $\beta$ and $\tau$ along $C_{r}(\beta,\tau)$. Now if $s_h$ and $x$ are $(\beta,\tau)$-linked, we can do $\beta\rightarrow\tau$ at $z$ and then do $\beta\rightarrow 1$ at $x$, reaching the earlier case that $L$ is not of Type B. We then assume $s_h$ and $z$ are $(\beta,\tau)$-linked. In this case, we first do $\beta\rightarrow \tau$ at $x$ and then do $\beta\rightarrow 1$ at $z$. Now $\varphi(ux)=\beta$, $\tau\in\phibar(x)$, and $1\in\phibar(z)$. We then do a shift along $L$ from $w$ to $z$, and color the edge $rz$ by $1$. For reader's convenience, we switch labels for color $1$ and $\tau$ to meet the notations we used earlier. After the switching of $1$ and $\tau$, we now still have $1\in\phibar(r)\cap\phibar(x)$, $\gamma\in\phibar(u)$, $\varphi(ux)=\beta$, and $F\cup F'$ is still an extended multi-fan as $\tau\notin K_F$, which returns to Case 1. Finally we may assume that $s_h$ is $(\beta,\tau)$-linked with neither $z$ nor $x$. We then do $\beta\rightarrow\tau$ at both $z$ and $x$. As a result, $\tau\in\phibar(x)\cap\phibar(z)$ and $\varphi(ux)=\beta$. Recall that $\tau\notin K_F$. If $r$ is $(1,\tau)$-linked with $z$, then by doing $\tau\rightarrow 1$ at $x$, we reach Case 1. If $r$ is $(1,\tau)$-linked with $x$, then by doing $\tau\rightarrow 1$ at $z$, we reach the earlier case where we shift along $L$ from $w$ to $z$, and color the edge $rz$ by $1$. If $r$ is $(1,\tau)$-linked with neither $z$ nor $x$, then by doing $\tau\rightarrow 1$ at both $z$ and $x$, we reach Case 1.
 This finishes Case 2.

\flushleft {\bf Case 3.} All the $\tau$-sequence outside of $F$ is of Type D, where $\tau=\varphi(xu)\notin$ $\phibar(V(F)$ $\cup $ $ V(F'))$ and $u\in N_{<\Delta}(x)\cap V(F\cup F')$.

Since $|N_{<\Delta}(x)\cap V(F\cup F')|\geq k+1$ by Claim 1 and $|K_F|<k$, there must exist two vertices $u$ and $u^*$ in $N_{<\Delta}(x)\cap V(F\cup F')$ such that $\tau=\varphi(xu)\notin\phibar(V(F)\cup V(F'))$ and $\tau^*=\varphi(xu^*)\notin\phibar(V(F)\cup V(F'))$ where the $\tau$-sequence $L_1$ and $\tau^*$-sequence $L_2$ are both of Type D ending with the same stopping color of $F$ in $K_F$.
 
 We claim that one of $L_1$ and $L_2$ is a sub-sequence of the other one. Otherwise, since $L_1$ and $L_2$ are of Type D both ending with the same stopping color, there exists $\theta\in\phibar(v_1)\cap\phibar(v_2)$ such that $v_1\in V(L_1)$, $v_2\in V(L_2)$, and $v_1\neq v_2$. Because $L_1$ and $L_2$ are of Type D, $\theta\notin\phibar(V(F))$. Since $\beta\in\phibar(s_h)$, at most one of $v_1$ and $v_2$ is $(\beta,\theta)$-linked with $s_h$. Thus we may assume that $v_1$ is not $(\beta,\theta)$-linked with $s_h$. Note that if $\theta\notin K_F$, $r$ and $s_h$ must be $(\beta,\theta)$-linked, as otherwise by switching $\beta$ and $\theta$ along $C_{r}(\beta,\theta,\varphi)$, we would have a larger multi-fan. Now by $\theta\rightarrow\beta$ at $s_1$, we have reached Case 2 as the sub-sequence of $L_1$ ending at $v_1$ is of Type B and this operation will not change the set $\{s\in N(s_h):\varphi(ss_h)\notin K_F\}$. Thus we have as claimed.
 
 Now by the above claim, we may assume $L_2$ is a sub-sequence of  $L_1$, $\tau^*\in\phibar(z)$ with $z\in V(L_1)$, and $\tau\notin K_F$. We are going to consider the following three cases depending on which one of $u$ and $u^*$ is in $V(F')$.

 We first assume that both $u$ and $u^*$ are in $V(F)$. Similarly as earlier, we may assume that there exist $\gamma\in\phibar(u)$ and $\gamma^*\in\phibar(u^*)$ such that $\gamma$ and $\gamma^*$ are incomparable along $\preceq_F$. Otherwise, say $\gamma\preceq_F \gamma^*$, then by shifting from $s_0$ to $v_F(\gamma^*)$ and uncolor the edge $rv_F(\gamma^*)$, we have as desired. Now we first do $1\rightarrow\gamma\rightarrow\tau$ at $x$ following Lemma~\ref{vf2}(a) and consider a maximal multi-fan $F^*$ under this new coloring $\varphi$. Since now $\tau\in\phibar(u)$, and $\gamma$ and $\gamma^*$ were incomparable along $\preceq_F$ earlier, we have $u,u^*,z\in V(F^*)$, $\tau\in\phibar(u)\cap\phibar(x)$,  $\tau^*\in\phibar(z)$, $\varphi(xu^*)=\tau^*$, and $\tau^*$ and $\gamma^*$ are incomparable along $\preceq_{F^*}$. Following Lemma~\ref{vf2}(a), we can do $\tau\rightarrow 1\rightarrow\gamma^*$ at $x$. As a result, $F^*$ is still a multi-fan and now $u^*$ and $x$ are $(\gamma^*,\tau^*)$-linked, a contradiction to Lemma~\ref{vf2}(b).

 We then assume that $u\in V(F')$ and $u^*\in V(F)$, or both $u, u^*\in V(F')$ but $u$ and $u^*$ do not satisfy $u\preceq_{L^*} u^*$ for a linear sequence $L^*$ in $F'$. Note that $\{\gamma,\gamma^*\}\cap \{\tau,\tau^*\}=\emptyset$ as $\tau=\varphi(xu)\notin\phibar(V(F)\cup V(F'))$ and  $\tau^*=\varphi(xu^*)\notin\phibar(V(F)\cup V(F'))$. In the case that $\gamma\notin K_F$, we do $1\rightarrow \gamma\rightarrow\tau$ at $x$ following Lemma~\ref{extend}(b), and in the case that $\gamma\in K_F$, we do $1\rightarrow \beta\rightarrow\gamma\rightarrow\tau$ at $x$ following Lemma~\ref{vf2}(b) and Lemma~\ref{extend}(c). Note that the set $\{s\in N(s_h):\varphi(ss_h)\notin K_F\}$ does not change after the above operations and $\tau\in\phibar(u)$ now.  Thus if $\gamma$ was in $\phibar(V(L_1))$ and $\gamma$ was changed to $1$ or $\beta$ by the above operations in  $\phibar(V(L_1))$, we then have a non-elementary extended multi-fan contradicting Lemma~\ref{extend}(a). Otherwise, since $L_2$ is a sub-sequence of  $L_1$, $\tau^*\in\phibar(z)$ with $z\in V(L_1)$ and  $u\npreceq_{L^*} u^*$ for any $L^*$ in $F'$, the new extended multi-fan will contain $z$, $u$, $u^*$  while $\tau^*\in\phibar(z)$, $\tau\in\phibar(x)$ and $\varphi(u^*x)=\tau^*$. Now since $\tau\notin K_F$, we just do $\tau\rightarrow 1$ at $x$ following Lemma~\ref{extend}(b) to reach Case 1 as $\varphi(u^*x)=\tau^*\in\phibar(z)$.
 
 Finally we assume that $u^*\in V(F')$ and $u\in V(F)$, or both $u, u^*\in V(F')$ but $u$ and $u^*$ do not satisfy $u^* \preceq_{L^*} u$ for a linear sequence $L^*$ in $F'$. Note that $\{\gamma,\gamma^*\}\cap \{\tau,\tau^*\}=\emptyset$ as $\tau=\varphi(xu)\notin\phibar(V(F)\cup V(F'))$ and  $\tau^*=\varphi(xu^*)\notin\phibar(V(F)\cup V(F'))$.  Similar as before, in the case that $\gamma^*\notin K_F$, we do $1\rightarrow \gamma^*\rightarrow\tau^*$ at $x$ following Lemma~\ref{extend}(b), and in the case that $\gamma^*\in K_F$, we do $1\rightarrow \beta\rightarrow\gamma^*\rightarrow\tau^*$ at $x$ following Lemma~\ref{vf2}(b) and Lemma~\ref{extend}(c). Note that the set $\{s\in N(s_h):\varphi(ss_h)\notin K_F\}$ does not change after the above operations, and $\tau^*\in\phibar(u^*)$ and $\tau^*\in\phibar(x)$. Moreover, we now have $\tau^*\in\phibar(u^*)$, $\tau^*\in\phibar(z)$, $\varphi(u^*x)=\gamma^*$, and vertices $u$, $u^*$ and the sub-sequence of $L_1$ after $z$ is contained in an extended multi-fan after the above operations. 
   Thus if $\gamma^*$ was a missing color in the sub-sequence of $L_1$ after $z$ and $\gamma^*$ was changed to $1$ or $\beta$ by the above operations as a missing color in the sub-sequence of $L_1$, we then have a non-elementary extended multi-fan contradicting Lemma~\ref{extend}(a). 
    If $\gamma^*$ was a missing color in the sub-sequence of $L_1$ until $z$  and $\gamma^*$ was changed to $1$ or $\beta$ by the above operations  as a missing color in the sub-sequence of $L_1$, we then do $\tau^*\rightarrow 1$ at $x$ when $\tau^*\notin K_F$ and $\tau^*\rightarrow\beta\rightarrow 1$ at $x$ when $\tau^*\in K_F$. Now $1\in\phibar(x)$, $\tau^*\in\phibar(u)$, $\varphi(xu)=\tau$, $u$ and $u^*$ are in an extended multi-fan while there is $\tau$-sequence of either Type A or B, reaching Case 1. If the above two possibilities did not happen, then we still have $\tau^*\in\phibar(u^*)\cap\phibar(z)\cap\phibar(x)$, $\tau=\phibar(ux)$, $u$ and $u^*$ are still in an extended multi-fan, and the $\tau$-sequence $L_1$ stays the same containing the vertex $z$. Now if $\tau^*\notin K_F$, we do $\tau^*\rightarrow 1$ at both $x$ and $z$ following Lemma~\ref{extend}(b) to reach Case 1, as now there is a $\tau$-sequence of Type B.   In the case that $\tau^*\in K_F$,  we do $\tau^*\rightarrow\beta\rightarrow 1$ at both $x$ and $z$ following Lemma~\ref{extend}(c) and Lemma~\ref{vf2}(b) to reach Case 1, as now there is a $\tau$-sequence of Type B.

    \end{proof}

    \bibliographystyle{plain}
\bibliography{SSL-BIB_08-19}

\begin{thebibliography}{10}

\bibitem{HZ}
Y.~Cao, G.~Chen, G.~Jing, and S.~Shan.
\newblock Proof of the {C}ore {C}onjecture of {H}ilton and {Z}hao.
\newblock {\em \arxiv{2004.00734}}, 2020.

\bibitem{core1}
Y.~Cao, G.~Chen, and S.~Shan.
\newblock Overfullness of critical class 2 graphs with a small core degree,
  2020.

\bibitem{MR848854}
A.~G. Chetwynd and A.~J.~W. Hilton.
\newblock Star multigraphs with three vertices of maximum degree.
\newblock {\em Math. Proc. Cambridge Philos. Soc.}, 100(2):303--317, 1986.

\bibitem{fw}
S.~Fiorini and R.~J. Wilson.
\newblock Edge-colourings of graphs, {R}esearch notes in {M}aths.
\newblock {\em Pitman, London}, 1977.

\bibitem{MR0349458}
J.~C. Fournier.
\newblock Colorations des ar\^{e}tes d'un graphe.
\newblock {\em Cahiers Centre \'{E}tudes Recherche Op\'{e}r.}, 15:311--314,
  1973.
\newblock Colloque sur la Th\'{e}orie des Graphes (Brussels, 1973).

\bibitem{Gupta-67}
R.~G. Gupta.
\newblock {\em Studies in the Theory of Graphs}.
\newblock PhD thesis, Tata Institute of Fundamental Research, Bombay, 1967.

\bibitem{MR1395947}
A.~J.~W. Hilton and Cheng Zhao.
\newblock On the edge-colouring of graphs whose core has maximum degree two.
\newblock {\em J. Combin. Math. Combin. Comput.}, 21:97--108, 1996.

\bibitem{Holyer}
I.~Holyer.
\newblock The {NP}-completeness of edge-coloring.
\newblock {\em SIAM J. Comput.}, 10(4):718--720, 1981.

\bibitem{StiebSTF-Book}
M.~Stiebitz, D.~Scheide, B.~Toft, and L.~M. Favrholdt.
\newblock {\em Graph edge coloring}.
\newblock Wiley Series in Discrete Mathematics and Optimization. John Wiley \&
  Sons, Inc., Hoboken, NJ, 2012.
\newblock Vizing's theorem and Goldberg's conjecture, With a preface by
  Stiebitz and Toft.

\bibitem{Vizing64}
V.~G. Vizing.
\newblock On an estimate of the chromatic class of a p-graph.
\newblock {\em Discret Analiz}, 3:25--30, 1964.

\bibitem{vizing-2factor}
V.~G. Vizing.
\newblock The chromatic class of a multigraph.
\newblock {\em Kibernetika (Kiev)}, 1965(3):29--39, 1965.

\bibitem{Vizing-2-classes}
V.~G. Vizing.
\newblock Critical graphs with given chromatic class.
\newblock {\em Diskret. Analiz No.}, 5:9--17, 1965.

\end{thebibliography}
\end{document}